\newtheorem{theorem}{Theorem}[section]
\newtheorem{lemma}[theorem]{Lemma}
\newtheorem{prop}[theorem]{Proposition}
\newtheorem{cor}[theorem]{Corollary}
\newtheorem{ques}[theorem]{Question}
\newtheorem{rmk}[theorem]{Remark}
\newcommand{\Gab}{G  -  a,b}
\newcommand{\R}{\mathbb R}
\newcommand{\TY}{\nabla\mathrm{Y}}
\newcommand{\YT}{\mathrm{Y}\nabla}
\title{Intrinsically knotted graphs with 21 edges}
\author{Jamison Barsotti}
\author{Thomas W.\ Mattman}
\address{Department of Mathematics and Statistics,
California State University, Chico,
Chico, CA 95929-0525}
\email{JBarsotti@mail.CSUChico.edu}
\email{TMattman@CSUChico.edu}
\subjclass[2010]{Primary 05C10, Secondary 57M15, 57M25 }
\keywords{spatial graphs, intrinsic knotting}
\begin{document}

\begin{abstract}
We show that the 14 graphs obtained by $\TY$ moves on $K_7$ constitute a complete list of the minor minimal intrinsically knotted graphs on 21 edges. We also present evidence in support of a conjecture 
that the 20 graph Heawood family, obtained by a combination of $\TY$ and $\YT$ moves on $K_7$, is
the list of graphs of size 21 that are minor minimal with respect to the property not $2$--apex.
\end{abstract}

\maketitle

\section{Introduction}

We say that a graph is {\bf intrinsically knotted or IK} if every tame embedding of the graph in $\R^3$ contains a non-trivially knotted cycle. A graph is
{\bf minor minimal IK or MMIK} if it is IK, but no proper minor has this property. Robertson and Seymour's Graph Minor Theorem~\cite{RS} shows that there is a finite list of MMIK graphs. However, as it remains difficult to determine this list, research has focused on classification with respect to certain families of graphs. For example, it follows from Conway and Gordon's seminal paper~\cite{CG} that $K_7$ is the only MMIK graph on seven or fewer vertices; two groups~\cite{CMOPRW} and \cite{BBFFHL} independently determined the MMIK graphs on eight vertices; and a classification of nine vertex graphs, based on a computer search, has been announced (see \cite{Mo} and \cite{GMN}). In terms of edges, it is 
known (\cite{JKM} and, independently, \cite{Ma}) that a graph of size 20 or less is not IK. The current paper presents a classification for graphs of 21 edges.  

Kohara and Suzuki~\cite{KS} showed that the 14 graphs obtained from $K_7$ by a (possibly empty) sequence of $\TY$ moves are MMIK. We will refer to this family as the
{\bf KS graphs}.  Recall that a {\bf $\TY$ move} consists of deleting the edges of a $3$-cycle $abc$ of graph $G$, and adding a new degree three vertex adjacent to the vertices $a$, $b$, and $c$. The resulting graph $G'$ has the same size as $G$ and one additional vertex. Our main theorem asserts that the KS graphs are precisely the MMIK graphs of size 21.

\begin{theorem}  \label{thmain}%
The 14 KS graphs are the only MMIK graphs on 21 edges.
\end{theorem}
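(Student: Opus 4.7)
The plan is to induct on $n = |V(G)|$, using the fact that a graph with at most 20 edges is not IK~\cite{JKM, Ma}. The MMIK hypothesis forces $\delta(G) \geq 3$, so the degree-sum formula gives $n \leq 14$; trivially $n \geq 7$ since a 6-vertex graph has at most $\binom{6}{2} = 15 < 21$ edges. The base case $n = 7$ is immediate: a 7-vertex graph with 21 edges is $K_7$, the first KS graph.

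For the inductive step, suppose first that $G$ has a vertex $v$ of degree 3 with neighbors $a, b, c$. Apply a $\YT$ move at $v$ to obtain $G'$ on $n - 1$ vertices. Because $\YT$ preserves intrinsic knottedness, $G'$ is IK. If any of $ab, bc, ca$ were already edges of $G$, then $|E(G')| \leq 20$, contradicting the edge bound; hence $|E(G')| = 21$. Every proper minor of $G'$ then has at most 20 edges and is not IK, so $G'$ is itself MMIK. The induction hypothesis gives that $G'$ is a KS graph, and since $G = \TY(G')$ at triangle $abc$, $G$ lies in the KS family by its closure under $\TY$.

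The remaining case is $\delta(G) \geq 4$, which forces $n \leq 10$. Note that a $\TY$-created degree-3 vertex retains degree 3 forever, because subsequent $\TY$ moves cannot add edges between pre-existing vertices and so its three neighbors remain pairwise non-adjacent; consequently every KS graph other than $K_7$ has $\delta = 3$, and the plan here is to rule out the existence of any $G$ at all. For $n = 7$ this is vacuous. For $n = 8$ the classifications of~\cite{CMOPRW, BBFFHL} yield only the $\TY$ of $K_7$, which has $\delta = 3$, so no $G$ arises. The subcases $n = 9$ and $n = 10$ require direct structural work: the constraints $|E| = 21$, $\delta \geq 4$, and $n$ fixed leave only a short list of admissible degree sequences, and each candidate must be eliminated, for instance by exhibiting a 2-apex structure (forcing non-IK) or a proper IK minor (contradicting minor-minimality).

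The principal obstacle is precisely this final case-by-case analysis for $n \in \{9, 10\}$, where the $\YT$ reduction is unavailable and one does not wish to appeal to the unpublished nine-vertex classification. All other steps---the $\YT$ inductive reduction, the small-$n$ base cases, and the closure of the KS family under $\TY$---follow routinely from the 21-edge non-IK bound of~\cite{JKM, Ma}.
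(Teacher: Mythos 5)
Your inductive step contains a fatal error: you assert that the $\YT$ move preserves intrinsic knottedness, but it is the $\TY$ move (replacing a triangle by a new degree-three vertex) that preserves IK; the reverse $\YT$ move does not, as first observed by Flapan and Naimi. The Heawood family itself supplies 21-edge counterexamples: six of its members are not IK even though each is obtained by a $\YT$ move from an IK graph in the family. So from ``$G$ is MMIK with a degree-three vertex $v$'' you cannot conclude that $G' = \YT(G)$ is IK, and the entire reduction from $n$ vertices to $n-1$ collapses. The correct reduction runs in the opposite direction: if $G$ has a \emph{triangle}, apply $\TY$ to obtain an IK graph $G'$ of order $n+1$ and size $21$, which is automatically MMIK because graphs of size $20$ are not IK; one then inducts \emph{downward} on order starting from the maximum $n = 14$ forced by $\delta(G) \geq 3$, so that the inductive hypothesis applies to the larger graph $G'$. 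What remains at each order is the triangle-free case, not the $\delta \geq 4$ case.

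That remaining work is also absent from your proposal. Even under your (incorrect) framing you defer $n \in \{9,10\}$ with $\delta(G) \geq 4$ to an unexecuted case analysis; in the correct framing one must classify the triangle-free $21$-edge MMIK graphs of every order from $14$ down to $10$ (orders nine and below being covered by the result that graphs of $20$ or fewer edges are $2$-apex), and this occupies essentially the entire paper. The key tools are that an IK graph cannot be made planar by deleting two vertices, together with a structural analysis of the split $K_{3,3}$'s arising as $G - a,b$ and of how the deleted vertices reattach; none of that machinery appears in your sketch, and without it the two exceptional triangle-free targets $H_{12}$ and $C_{14}$ cannot be identified.
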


As Kohara and Suzuki already proved these graphs are MMIK, our contribution is to show that no other graph of size 21 is IK. (Graphs of size 20 are not IK, so a  connected 21 edge IK graph is also MMIK.)

We break the proof into cases by the order of the graph.  Let $G$ be a MMIK graph of size 21. We can assume $\delta(G)$, the {\bf minimum degree}, is at least three. Indeed, deleting a degree zero vertex or
contracting an edge of a vertex of degree one or two will result in an IK minor. Since a $(15,21)$ graph must have at least one vertex of degree two or less, we can assume $|V(G)| \leq 14$. Our argument is an induction starting with the case of $(14,21)$ graphs and descending to $(13,21)$ and so on.

Our induction on decreasing graph order relies on an observation essentially 
due to Sachs (see~\cite{S}): the $\TY$ move preserves intrinsic knotting. 
The reverse {\bf $\YT$ move}, delete a degree three vertex and add edges to make its neighbors mutually adjacent, does not preserve IK and this is illustrated by the Heawood Family. Following \cite{HNTY}, {\bf Heawood family} will denote the set of 20 graphs obtained from $K_7$ by a sequence of zero or more $\TY$ or $\YT$ moves. The family is illustrated schematically in Figure~\ref{figHea}  (taken from \cite{GMN}) where $K_7$ is graph 1 at the top of the figure and the $(14,21)$ Heawood graph is graph 18 at the bottom. In addition to the 14 KS graphs, the Heawood family includes six additional graphs (graphs 9, 14, 16, 17, 19, 20 in the figure) that are not IK, as was shown independently in \cite{GMN} and \cite{HNTY}. Thus, for example, the $\YT$ move from graph 5 to 9 takes an IK graph to one that is not. (That $\YT$ does not preserve IK was first observed by Flapan and Naimi~\cite{FN}).

\begin{figure}[htb]
\begin{center}
\includegraphics[scale=0.75]{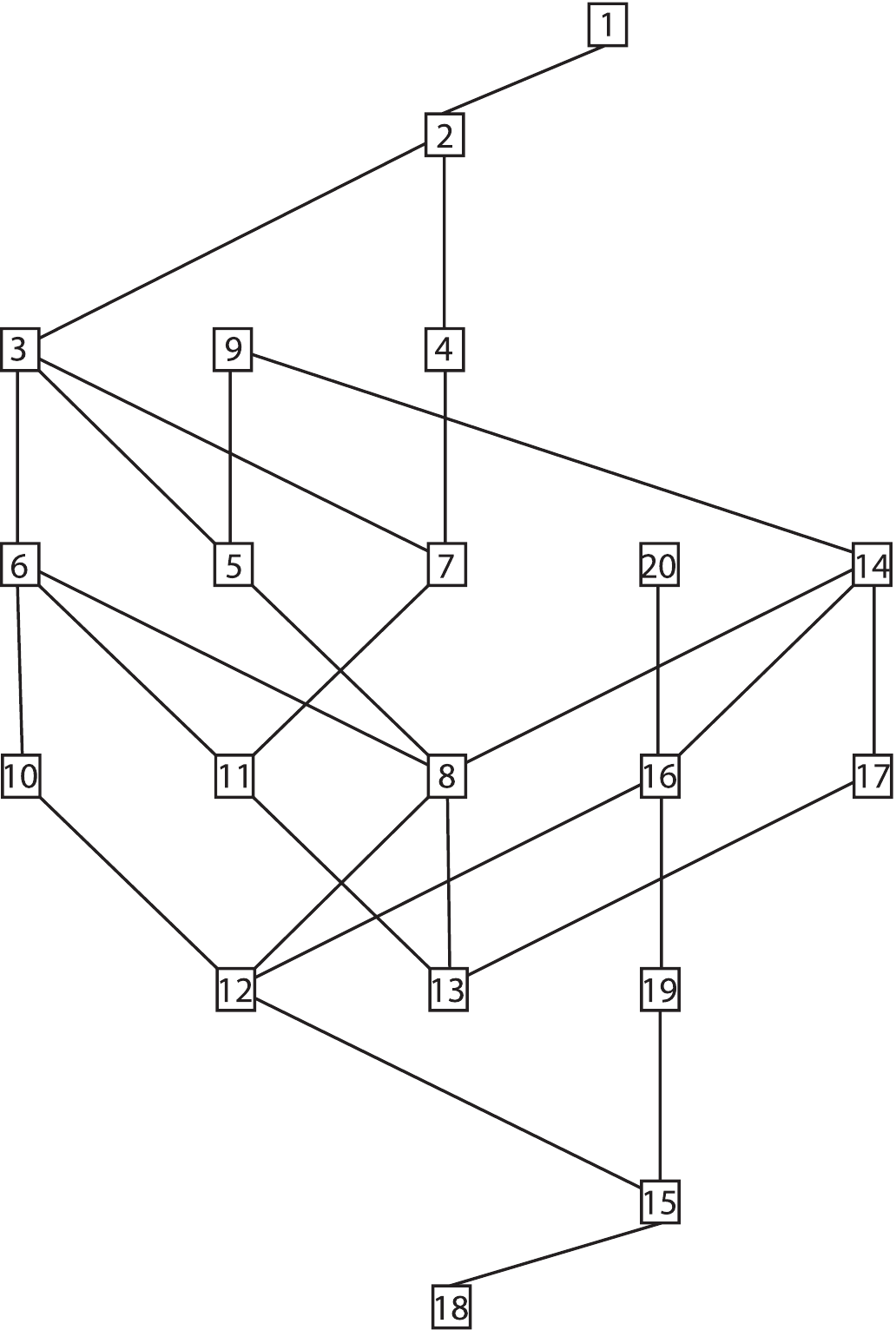}
\caption{The Heawood family (figure taken from \cite{GMN}). Edges represent $\TY$ moves.}\label{figHea}
\end{center}
\end{figure}

We conclude this introduction with some observations and questions about 21 edge graphs that are not 
$2$--apex.
Recall that a graph is {\bf $n$--apex} if it can be made planar through deletion of $n$ or fewer vertices (and their edges). Thus, a graph $G$ is {\bf not $2$--apex, or N2A} if whenever two vertices (and their edges) are deleted, the resulting graph is not planar.
We will make much use of the following lemma, which is a consequence of the observation, due independently to \cite{BBFFHL} and \cite{OT}, that the join, $H * K_2$, of $H$ and $K_2$ is IK if and only if $H$ is nonplanar. 

\begin{lemma} \label{lem2ap}%
\cite{BBFFHL, OT}
If $G$ is IK, then $G$ is N2A.
\end{lemma}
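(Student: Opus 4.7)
The plan is to prove the contrapositive: if $G$ is not N2A, then $G$ is not IK. So suppose there exist two vertices $a,b \in V(G)$ such that $H := G - \{a,b\}$ is planar. I want to produce a knotless tame embedding of $G$ in $\R^3$.

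The key step is to embed $G$ inside a larger, convenient graph. Form the join $H * K_2$, i.e., take $H$ together with two new vertices playing the role of $a,b$, each joined to every vertex of $H$ and to each other. Since $H$ is planar, the observation of \cite{BBFFHL} and \cite{OT} cited in the excerpt tells us that $H * K_2$ is not IK, so it admits a knotless tame embedding in $\R^3$.

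Next I would verify that $G$ is a subgraph of $H * K_2$: any edge of $G$ either has both endpoints in $V(H)$ (so it is an edge of $H$), or has exactly one endpoint in $\{a,b\}$ (so it is one of the edges added by the join), or is the edge $ab$ itself (also present in $H * K_2$). Restricting the knotless embedding of $H * K_2$ to the subgraph $G$ produces a tame embedding of $G$ containing no nontrivially knotted cycle, since every cycle of the embedded $G$ is a cycle of the embedded $H * K_2$. Thus $G$ is not IK, completing the contrapositive.

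There is no real obstacle here: the lemma is essentially a direct consequence of the cited join characterization. The only points to be careful about are (i) that $G$ is merely a subgraph of $H * K_2$ rather than equal to it (which is harmless since intrinsic knotting is inherited by supergraphs, equivalently knotless embeddings restrict to subgraphs), and (ii) that the two vertices of the $K_2$ factor in the join correspond to the deleted vertices $a,b$ of $G$, which need not have been adjacent in $G$ nor joined to every other vertex of $G$.
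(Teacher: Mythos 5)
Your proof is correct and follows exactly the route the paper intends: the lemma is stated there as an immediate consequence of the cited fact that $H * K_2$ is IK if and only if $H$ is nonplanar, and your contrapositive argument (embed $G$ as a subgraph of $H * K_2$ with $H = G - a,b$ planar, then restrict a knotless embedding) is precisely that deduction, with the subgraph/monotonicity point correctly handled.
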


In other words the class of IK graphs is a subset of those that are N2A.
In particular, every graph in the Heawood family is N2A and it's natural to ask if there are other 21 edge 
examples. (Since size 20 graphs are $2$--apex~\cite{Ma}, a connected 21 edge N2A graph is necessarily minor minimal or MMN2A.)

\begin{ques} \label{queHMMN2A}%
 Is the Heawood family the set of graphs of size 21 that are MMN2A?
\end{ques}

The following observation allows us to answer the question for graphs of order ten or less.

\begin{prop}  \label{prop1}%
Let $G$ be a graph with either $|V(G)| \leq 8$ or else $|V(G)| \leq 10$ and $|E(G)| \leq 21$. If $G$ is  N2A and a $\YT$ move takes $G$ to $G'$, then $G'$ is also N2A.
\end{prop}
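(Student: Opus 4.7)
The plan is to argue the contrapositive: suppose $G'$ is $2$-apex with witnesses $u, v \in V(G')$ (so $G' - u - v$ is planar), and produce a pair in $G$ whose removal is planar, contradicting N2A. The argument splits on $|\{u, v\} \cap \{a, b, c\}|$. If $\{u, v\} \subseteq \{a, b, c\}$, say $\{u, v\} = \{a, b\}$, then $G - a - b$ is $G' - a - b$ with $y$ attached as a pendant at $c$, and pendants preserve planarity. If exactly one of $u, v$ lies in $\{a, b, c\}$, say $u = a$ and $v \notin \{a, b, c\}$, then $G - a - v$ is $G' - a - v$ with the edge $bc$ subdivided by $y$, and subdivisions preserve planarity. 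Either way $G$ is $2$-apex.

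The real case is $\{u, v\} \cap \{a, b, c\} = \emptyset$, in which $G - u - v$ and $G' - u - v$ differ by a $\TY$ move on the triangle $abc$. This move can genuinely destroy planarity (for example, $\TY$ on a triangle of the planar graph $K_5 - e$ produces $K_{3,3}$), so planarity cannot be transferred for free. By N2A, $G - u - v$ is non-planar, hence has a $K_5$ or $K_{3,3}$ minor. Since $G - u - v - y \subseteq G' - u - v$ is planar, $y$ must lie in some branch set $V_1$ that meets $\{a, b, c\}$. In almost every configuration, replacing $V_1$ by $V_1 \setminus \{y\}$ inside $G' - u - v$ yields the same type of Kuratowski minor: the new triangle edges $ab, bc, ca$ preserve the internal connectivity of $V_1 \setminus \{y\}$ and supply any external edges previously contributed by $y$, contradicting planarity of $G' - u - v$. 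For $K_5$ minors this is automatic, since $y$'s three edges cannot serve four distinct external branch sets, forcing $V_1 \cap \{a, b, c\} \neq \emptyset$.

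The one delicate configuration is a $K_{3,3}$ minor with $V_1 = \{y\}$ and $a, b, c$ each in a distinct branch set of the opposite part. Such a configuration consumes at least six vertices among branch sets, so $|V(G)| \geq 8$; the hypothesis $|V(G)| \leq 8$ (or $|V(G)| \leq 10$ with $|E(G)| \leq 21$) then forces the branch sets to be near-singletons, so $G - u - v$ essentially equals a $K_{3,3}$ on some $\{y, p, q\} \cup \{a, b, c\}$. In this tight regime, keeping $G$ N2A forces $u$ and $v$ to be densely attached to $\{a, b, c\}$ (in order to make deletions like $G - y - p$, $G - y - q$, and $G - p - q$ non-planar), but such heavy attachment collapses a deletion like $G - a - b$ into a star-like planar graph, providing an alternative $2$-apex witness for $G$. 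The main obstacle is making this exhaustive verification rigorous across all admissible $(|V(G)|, |E(G)|)$ pairs; the first two sub-cases are essentially immediate, while the exceptional $K_{3,3}$ configuration is precisely where the stated bounds are genuinely invoked.
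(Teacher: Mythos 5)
Your reduction is attractive and the easy parts are right: when the $2$--apex witness pair $\{u,v\}$ for $G'$ meets the triangle $\{a,b,c\}$, the pendant/subdivision observations correctly show $G-u-v$ is planar, contradicting N2A of $G$; and when $\{u,v\}$ is disjoint from the triangle, your minor-transfer argument correctly shows that any Kuratowski minor of $G-u-v$ whose $y$--branch set meets $\{a,b,c\}$ survives in $G'-u-v$, so the only obstruction is a $K_{3,3}$ minor with $\{y\}$ as a singleton branch set and $a,b,c$ in the three opposite branch sets. That isolation of the bad configuration is a genuinely nice framing that the paper does not use.

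The gap is that the exceptional configuration is the entire content of the proposition --- it is exactly where the size hypotheses must enter (your own $K_5-e\to K_{3,3}$ example shows the statement fails without them) --- and your treatment of it is a sketch whose central claim is unjustified and probably not even the right mechanism. You assert that non-planarity of $G-y-p$, $G-y-q$, $G-p-q$ ``forces $u$ and $v$ to be densely attached to $\{a,b,c\}$,'' but those graphs can be non-planar for many reasons having nothing to do with edges from $u,v$ to the triangle: with $|V(G)|=10$ and up to $21$ edges there are two vertices of $G-u-v$ outside the six branch sets and roughly a dozen edges unaccounted for, so Kuratowski subgraphs avoiding $\{a,b,c\}$ entirely are available. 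Nothing in your argument rules this out, and the subsequent claim that ``heavy attachment collapses $G-a-b$ into a star-like planar graph'' is never made precise. For calibration: the paper disposes of $|G|\leq 9$ by citing classifications (the $23$ IK graphs on eight vertices, and the order-nine N2A graphs from \cite{Ma}), and its order-ten case is a full page of analysis that fixes a planar embedding of $G'-a,b$, splits the four non-triangle vertices into the interior and exterior of the triangle, and repeatedly plays the non-planarity of $G-b,v_i$ for each triangle vertex $v_i$ against that embedding. Something of that scale is what your final case still owes; as written, the proof stops exactly where the proposition starts to be nontrivial.
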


On the other hand, it is straightforward to settle the question for order 14 or more using the idea that a
MMN2A graph has minimal degree 3. Thus, all that remains are graphs of orders 11, 12, or 13.

\begin{prop} \label{prop2}%
If $G$ is MMN2A with $|E(G)| = 21$ and $|V(G)| \neq 11,12,13$, then $G$ is a Heawood graph.
\end{prop}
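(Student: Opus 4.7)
I split the argument by $|V(G)|$. Since $G$ is MMN2A, $\delta(G) \geq 3$: a vertex of degree $0$ can be deleted, and one of degree $1$ or $2$ contracted along an incident edge, in each case giving a smaller N2A graph and contradicting minimality. Combined with $2|E(G)|=42$, this forces $|V(G)| \leq 14$, which rules out all orders $\geq 15$ at once.

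For $|V(G)|=14$, the inequality $\sum_v \deg(v) \geq 3|V(G)|$ is tight so $G$ is cubic. Every proper minor of $G$ has at most $20$ edges and is therefore $2$--apex by \cite{Ma}, so MMN2A collapses to N2A in this case. The plan is to show $G$ has girth at least $6$: for each candidate triangle, $4$-cycle, or $5$-cycle in $G$, I would identify two nearby vertices whose removal leaves a planar subgraph, contradicting N2A. Since the Heawood graph is the unique $(3,6)$-cage on $14$ vertices, this pins down $G$.

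For $7 \leq |V(G)| \leq 10$, I would induct downward on $|V(G)|$ using Proposition \ref{prop1}. The base case $|V(G)|=7$ forces $G = K_7$, the unique $(7,21)$-graph. For $8 \leq |V(G)| = n \leq 10$, one locates a degree-$3$ vertex $v$ of $G$ whose neighbors are mutually non-adjacent and applies $\YT$ to obtain $G'$ on $n-1$ vertices with the same $21$ edges. Proposition \ref{prop1} gives that $G'$ is N2A; and since $G'$ has no isolated vertex, any proper minor of $G'$ loses an edge and thus has at most $20$ edges, so is $2$--apex by \cite{Ma}. Hence $G'$ is in fact MMN2A, and by the inductive hypothesis it is a Heawood graph. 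Because $G$ is recovered from $G'$ by the inverse move $\TY$, and the Heawood family is closed under $\TY$, $G$ is a Heawood graph.

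The main obstacles are twofold. In the order-$14$ case, producing the explicit planar embeddings that rule out short cycles requires a patient structural case analysis on the neighborhoods of candidate short cycles. In the orders $\leq 10$ case, the possible absence of a cleanly $\YT$-reducible vertex---either because no vertex has degree $3$, or because every such vertex has two adjacent neighbors---must be handled separately, presumably by exploiting MMN2A minimality to force a proper N2A minor that contradicts minimality, or by a direct inspection of the few feasible degree sequences at each of these orders.
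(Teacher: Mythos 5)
Your skeleton matches the paper's: $\delta(G)\geq 3$ forces $|V(G)|\leq 14$, and the argument splits by order. But two of your cases are plans rather than proofs, and one of them conceals essentially all of the work. For order $14$, the paper proves directly (Proposition~\ref{prop14N2A}) that a connected $(14,21)$ N2A graph is $C_{14}$: it shows $G$ is cubic, picks $a,b$ with no common neighbors so that $G-a,b$ is a split $K_{3,3}$, and applies Lemma~\ref{lemda3}. Your alternative --- show girth at least $6$ and quote uniqueness of the $(3,6)$-cage --- has a sound endgame, but establishing the girth bound is exactly the hard part and you have not carried it out.

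The more serious gap is in orders $7$--$10$. Your downward $\YT$-induction only applies when $G$ has a degree-three vertex, and that hypothesis genuinely fails: graph 20 of the Heawood family has order $10$ and degree sequence $(4^8,5^2)$, so it is an actual MMN2A graph that your induction can never reach. The paper disposes of orders $\leq 9$ by citing the prior classifications in \cite{Ma}, \cite{CMOPRW}, and \cite{BBFFHL} rather than by induction, uses the $\YT$/Proposition~\ref{prop1} argument only for the order-$10$ graphs that do have a degree-three vertex, and then spends the bulk of its proof on the order-$10$, $\delta(G)\geq 4$ case: it pins the degree sequence to $(4^8,5^2)$ or $(4^9,6)$, analyzes $\Gab$ as a nonplanar $(8,11)$ or $(8,12)$ graph via the catalogues in Figures~\ref{figNP811}, \ref{figNP710}, \ref{figNP711}, and \ref{fig8cubic}, and shows the only survivor is graph 20. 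Your closing sentence ("handled separately, presumably by \dots direct inspection") names this case but does not do it, so as written the proposal establishes the proposition only for graphs admitting a $\YT$-reduction, which excludes at least one of the graphs the proposition is about. One small point in your favor: the subcase you worry about, a degree-three vertex whose neighborhood contains an edge, cannot occur, since the $\YT$ move would then yield a graph with at most $20$ edges, hence $2$--apex by \cite{Ma}, contradicting Proposition~\ref{prop1}.
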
 

In~\cite{HNTY}, the authors show that the Heawood graphs are minor minimal with respect to the property intrinsically knotted or completely $3$--linked. This suggests that property may be related to N2A.

\begin{ques} How are N2A graphs related to those that are intrinsically  knotted or completely $3$--linked?
\end{ques}

It's easy to see that $\YT$ does not preserve N2A in general. For example, the disjoint union of
three $K_{3,3}$ graphs is N2A, but applying a $\YT$ move destroys this property. However, it
may be that Proposition~\ref{prop1} can be extended to all graphs of size $21$.

\begin{ques} \label{queYTN2A}%
 Does $\YT$ preserve N2A on graphs of size 21?
\end{ques}

As $\YT$ does preserve N2A in the Heawood graphs, an affirmative answer to Question~\ref{queHMMN2A}
would imply the same for Question~\ref{queYTN2A}. If so, we could ask about the first instance of $\YT$ not preserving N2A.

\begin{ques} What is the simplest (e.g., smallest in size or order) graph $G$ that is N2A but admits
a $\YT$ move to a graph $G'$ that is $2$--apex?
\end{ques}

After introducing some preliminary lemmas in the next section, we 
devote one section each to intrinsic knotting of graphs of order $14$, $13$, $12$, $11$, and $10$, respectively.  As graphs of order nine or less were treated earlier in \cite{Ma}, taken together
this constitutes a proof of Theorem~\ref{thmain}. We conclude the paper with a proof of 
Propositions~\ref{prop1} and \ref{prop2} in Section~\ref{secN2A}.

After preparing this paper we learned that Lee, Kim, Lee, and Oh~\cite{LKLO} have also announced a proof
of Theorem~\ref{thmain}. Our approach is based on the first author's thesis~\cite{B}.

\section{Definitions and Lemmas}

As mentioned in the introduction, we prove the main theorem by induction starting with graphs of
14 vertices and working down to those having ten. We begin by observing that it is enough to consider 
triangle--free graphs.

\begin{rmk} \label{rmkTfree}
As the KS graphs are precisely the IK graphs in the Heawood family, it will be enough for us to show that size 21 MMIK graphs are Heawood. Using our induction, this allows us to reduce to the case of triangle--free graphs. Indeed, if a 21 edge MMIK graph $G$ has a triangle, apply a $\TY$ move to obtain an IK graph $G'$ with one additional vertex. This graph must be MMIK as graphs on 20 edges are not IK. Then, by the inductive hypothesis, $G'$ is Heawood, whence $G$ is also.
\end{rmk}

\begin{figure}[ht]
\begin{center}
\includegraphics[scale=0.5]{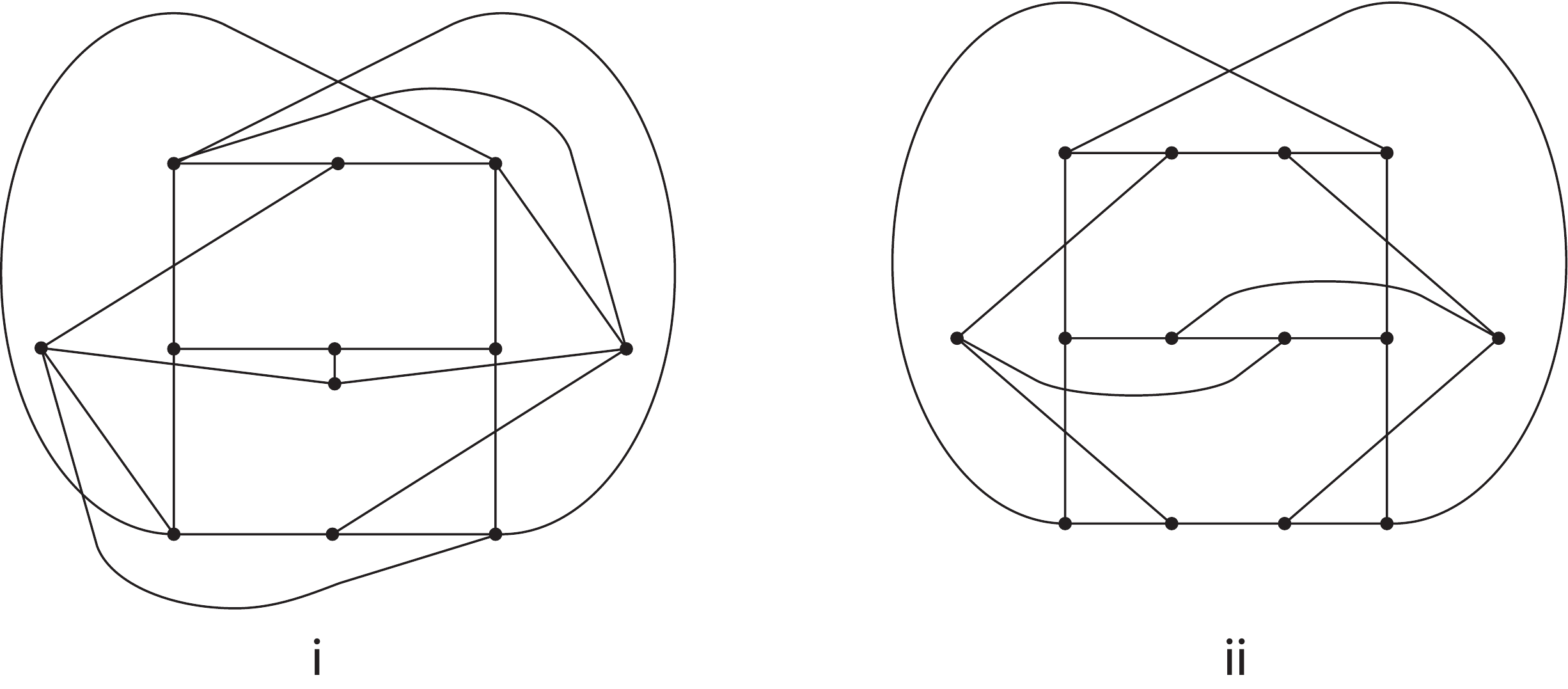}
\caption{The two triangle--free Heawood graphs. i) $H_{12}$ ii) $C_{14}$}\label{figH12C14}
\end{center}
\end{figure}

Note that only two graphs in the Heawood family are triangle--free, namely graphs 13 and 18 in Figure~\ref{figHea}. These graphs were named $H_{12}$ and $C_{14}$ by Kohara and Suzuki~\cite{KS}, see Figure~\ref{figH12C14}.

Throughout this paper, for $a,b \in V(G)$, we will use $G-a$ and $G-a,b$ to denote the 
induced subgraphs on $V(G) \setminus \{a\}$ and $V(G) \setminus \{a,b \}$, respectively.
We will also write $G+a$ to denote a graph with vertices $V(G) \cup \{a\}$ that includes $G$ as the induced subgraph on $V(G)$. In case $V(G)$ and $\{a\}$ are included in the vertex set of some larger graph, $G+a$ will mean the induced subgraph on $V(G) \cup \{a\}$.

Here is an example of how Lemma~\ref{lem2ap} and the triangle--free condition work in concert.
We'll use $|G|$ to denote the order, or number of vertices of a graph.

\begin{lemma} \label{lem2comp}%
Let $G$ be a graph with minimum degree $\delta(G) \geq 3$.  Suppose 
$\exists a,b \in V(G)$ such that $G-a,b$ has a tree component $T$. If $|T| \leq 3$ or $T$ has a degree two vertex adjacent to a leaf, then $G$ has a triangle.
\end{lemma}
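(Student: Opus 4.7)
The plan is to use the degree bound $\delta(G) \geq 3$ together with the fact that vertices of the tree component $T$ can only see $T \cup \{a,b\}$ outside of $T$ itself. This forces leaves and degree--$2$ vertices of $T$ to have many edges into $\{a,b\}$, and the resulting overlap of neighborhoods will produce the triangle.

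The key observation I would isolate first is: since $T$ is a connected component of $G-a,b$, every vertex $v \in V(T)$ has $N_G(v) \subseteq V(T) \cup \{a,b\}$. Combined with $\mind{G} \geq 3$, this immediately forces each leaf of $T$ to be adjacent in $G$ to \emph{both} $a$ and $b$, and each degree--$2$ vertex of $T$ to be adjacent to at least one of $a,b$. This single observation drives both parts of the lemma.

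For the second clause, suppose $v \in V(T)$ has degree $2$ in $T$ and is adjacent in $T$ to a leaf $\ell$. The leaf $\ell$ is adjacent in $G$ to both $a$ and $b$, and $v$ is adjacent in $G$ to at least one of them; whichever it is (say $a$), the three edges $v\ell$, $\ell a$, $va$ form a triangle. For the first clause I would split on $|T|$: $|T|=1$ is impossible, since a single vertex of $T$ has no neighbors in $T$ and only $a,b$ available, contradicting $\mind{G} \geq 3$; when $|T|=2$, both endpoints of the unique edge are leaves, hence each is adjacent to both $a$ and $b$, and the triangle on those two vertices together with $a$ appears; and $|T|=3$ forces $T$ to be the path $P_3$, whose middle vertex is degree $2$ and adjacent to a leaf, which is the situation already handled.

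There is no real obstacle here; the only thing to be careful about is not double-counting edges from $T$ into $\{a,b\}$ and making sure the degree bookkeeping actually forces neighborhoods to overlap. Once the initial observation about leaves being adjacent to both of $a,b$ is in place, each subcase is a one-line verification.
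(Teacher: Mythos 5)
Your proposal is correct and follows essentially the same route as the paper: both hinge on the observation that $\delta(G)\geq 3$ forces each leaf of $T$ to be adjacent to both $a$ and $b$ and each degree-two vertex of $T$ to be adjacent to at least one of them, after which the triangle appears with a leaf and one of $a,b$ as two of its vertices. Your version merely spells out the $|T|\leq 3$ subcases more explicitly than the paper does.
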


\begin{proof} 
Since $\delta(G) \geq 3$, leaf vertices of $T$ are adjacent to both $a$ and $b$ in $G$ while degree two vertices are adjacent to at least one. Thus, 
under the hypotheses on $T$, a triangle is formed, with a leaf of the tree and one of $a$ and $b$ constituting two of the triangle's vertices.
\end{proof}

In other words, if $G$ is MMIK, triangle-free, and of order 21, then, by Lemma~\ref{lem2ap}, $G$ is N2A. So $\forall a,b \in V(G)$, $\Gab$ is nonplanar. Lemma~\ref{lem2comp} restricts the structure of any tree components of these nonplanar graphs. Our strategy is to combine enough restrictions of this type to either force a contradiction or else demonstrate that $G$ is $H_{12}$ or $C_{14}$.

As above, we will often encounter non-planar graphs of the form $\Gab$. Although this means $\Gab$ has either a $K_5$ or $K_{3,3}$ minor by Kuratowski's theorem, the $K_{3,3}$ case is more important in our 
argument. Especially, we will often encounter {\bf split $K_{3,3}$'s}, graphs obtained from
$K_{3,3}$ by a finite (possibly empty) sequence of vertex splits.  Conversely, this means that starting from a split $K_{3,3}$  graph $G$, we can recover a $K_{3,3}$ minor by repeatedly deleting vertices using the following two {\bf deletion operations} until there remain no vertices of degree less than three.
\begin{description}
\item[D1] Delete a vertex of degree one and its edge.
\item[D2] Delete a degree two vertex $b$ replacing its edges $ab$ and $bc$ with a new edge $ac$. 
\end{description} 
We will refer to the six vertices in $G$ that survive this sequence
of deletions as the {\bf orginal vertices.} 
Since there may be more than one sequence of deletion moves leading 
to $K_{3,3}$, in general, there's more than one way to choose original vertices.
As our argument does not depend on this choice, we'll often assume, without further 
explanation, that a specific choice has been made.
An {\bf original $4$--cycle} is a cycle $C$ in $G$ that passes through exactly four original vertices. 
The {\bf split $4$--cycle} of $C$ is the
component of $C$ in $G - v,w$ where $v$ and $w$ are the 
two original vertices not in $C$.

In addition to D1 and D2 we will have occasion to refer to D0, meaning deletion of an isolated vertex. 
The {\bf simplification} of a graph $G$ is the graph $G'$ with $\delta(G') \geq 3$ formed by repeatedly applying the three deletion operations to $G$. Although, in principle, $G'$ may be a multi-graph (e.g., applying D2 to a vertex in a three cycle will lead to a double edge), that won't happen in the examples we consider in this paper. For example, as mentioned above, the simplification of a split $K_{3,3}$ is $K_{3,3}$.

\begin{figure}[ht]
\begin{center}
\includegraphics[scale=0.35]{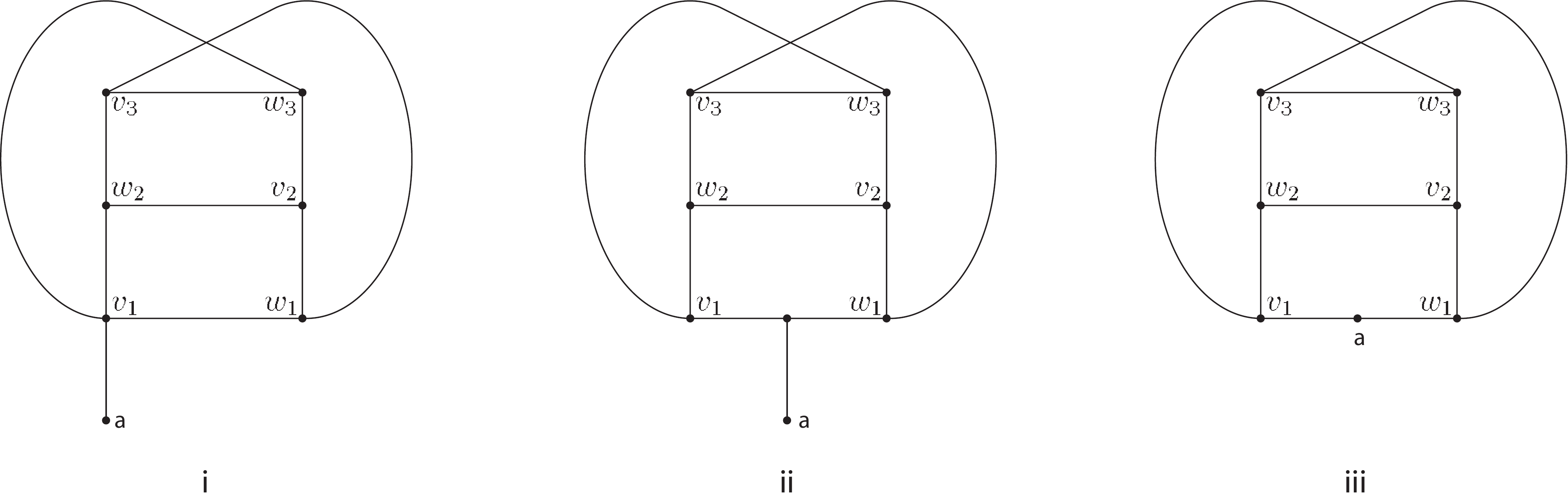}
\caption{The simplification of a split $K_{3,3}$ relative to $a$.}\label{figK33nrprt}
\end{center}
\end{figure}

We'll also use the idea of simplification relative to a vertex.
Let $G$ be a split $K_{3,3}$ and $a \in V(G)$. The {\bf simplification of $G$ relative to $a$}, $G|_{a}$, is the graph formed 
by repeatedly applying D1 and D2 to delete vertices other than $a$ until all 
vertices (except possibly $a$) have degree at least three.
Then either $a$ is an original vertex or else  $G|_{a}$ is one of the three graphs of Figure~\ref{figK33nrprt}. In case $a=v_1$ is an original vertex of $G$ or we
have the graph of Figure~~\ref{figK33nrprt}i, we say that $v_1$ is the {\bf nearest part} of 
$K_{3,3}$ to $a$. In the case of Figure~\ref{figK33nrprt}ii or iii, we will say that the edge $v_1w_1$ is the {\bf nearest part} of $K_{3,3}$ to $a$.  

Here's an alternative characterization of split $K_{3,3}$ graphs. We use $\chi(G) = |G| - \|G\|$ to denote the {\bf Euler characteristic of a graph}, that is, the difference between the $|G| = |V(G)|$ and $\|G\|=|E(G)|$.

\begin{lemma}
\label{lemsplitk33} 
A graph $G$ is a split $K_{3,3}$, if and only if, it is connected with a $K_{3,3}$ minor and $\chi(G)=-3$.
\end{lemma}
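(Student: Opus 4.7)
The plan is to prove the two directions separately. For the forward direction, I would observe that a vertex split of $v$ into adjacent $v_1, v_2$ (with $v$'s original edges distributed between them) adds exactly one vertex and one edge, so preserves $\chi$, connectedness, and the $K_{3,3}$ minor (which is recovered by contracting the edges introduced by the splits). Since $\chi(K_{3,3}) = 6 - 9 = -3$, every split $K_{3,3}$ is connected, has a $K_{3,3}$ minor, and satisfies $\chi = -3$.

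For the converse, suppose $G$ is connected with a $K_{3,3}$ minor and $\chi(G) = -3$. Fix disjoint connected branch sets $B_1, \ldots, B_6 \subseteq V(G)$ realising the minor, and write $b_i = |B_i|$. Since $G$ is connected, any component of $V(G) \setminus \bigcup B_i$ has an edge to some $B_i$, so by greedily absorbing such vertices into an adjacent bag along a connecting edge one may assume the $B_i$ partition $V(G)$; thus $|V(G)| = \sum b_i$. As each $B_i$ is connected, $G[B_i]$ has at least $b_i - 1$ edges, and for each of the nine edges of $K_{3,3}$ there must be at least one edge of $G$ between the corresponding pair of bags. Hence
\[
|E(G)| \geq \sum_i (b_i - 1) + 9 = |V(G)| + 3,
\]
and $\chi(G) = -3$ forces equality throughout: each $G[B_i]$ is a spanning tree $T_i$, and exactly nine edges run between the bags, realising the edges of $K_{3,3}$.

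Thus $G$ is obtained from $K_{3,3}$ by replacing each vertex $i$ with the tree $T_i$ and attaching the three $K_{3,3}$-edges incident to $i$ to (possibly repeated) vertices of $T_i$. I would finish by induction on $\sum_i(b_i - 1)$, showing that any such ``tree-replaced $K_{3,3}$'' is a split $K_{3,3}$: the base case $\sum b_i = 6$ gives $G = K_{3,3}$; otherwise some $T_i$ has a leaf $\ell$, and contracting the unique edge of $T_i$ at $\ell$ shrinks $T_i$ by one vertex while redirecting $\ell$'s bag-to-bag edges (if any) to its neighbour, producing a smaller tree-replaced $K_{3,3}$ to which the hypothesis applies. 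Reversing this contraction is exactly a vertex split, which rebuilds $G$ from the smaller split $K_{3,3}$.

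The main obstacle I anticipate is book-keeping: justifying the greedy absorption of stray vertices cleanly, extracting the tree-plus-nine-bridges structure from a single tight inequality, and verifying that each step of the concluding induction indeed corresponds to a legal vertex split rather than, say, introducing a multi-edge or disturbing the branch-set structure.
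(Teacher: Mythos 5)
Your forward direction coincides with the paper's: a vertex split adds one vertex and one edge, so it preserves connectivity, the $K_{3,3}$ minor, and $\chi$. For the converse the two arguments genuinely diverge. The paper starts from a split $K_{3,3}$ \emph{subgraph} $H$ of $G$ (implicitly using that a $K_{3,3}$ minor yields a $K_{3,3}$ subdivision, since $K_{3,3}$ has maximum degree three, and a subdivision is a split), notes that $\chi(G)=\chi(H)$ forces equally many new vertices as new edges, and uses connectivity to attach the new vertices one at a time, each with a single edge --- i.e., by degenerate vertex splits creating pendant vertices, later undone by D1. You instead work with a branch-set model of the minor: after normalizing the branch sets to partition $V(G)$, the inequality $\|G\| \geq \sum_i (b_i-1) + 9 = |G|+3$ together with $\chi(G)=-3$ forces each bag to induce a tree and exactly one edge to join each pair of bags corresponding to an edge of $K_{3,3}$ (and none otherwise), and an induction contracting leaf edges of the bag-trees finishes. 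Both routes are correct. The paper's is shorter but leans on the unstated subdivision fact and on the mildly delicate claim that the surplus can always be attached pendant-by-pendant (which does follow by contracting $H$ to a point and noting the quotient must be a tree). Yours is longer but more robust: the book-keeping worries you raise all resolve themselves once the inequality is tight --- in particular no multi-edge can arise when contracting a leaf edge, because two vertices of the same bag-tree never share a neighbour (the bag is a tree and each adjacent pair of bags carries exactly one cross edge) --- and your argument generalizes verbatim to splits of any connected graph, whereas the subdivision step is special to graphs of maximum degree three.
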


\begin{proof}
Assume $G$ is a split $K_{3,3}$. Since $G$ can be made using a series of vertex splits on a $K_{3,3}$ graph, then it is connected and has a $K_{3,3}$ minor. Since each vertex split adds exactly one vertex and one edge, $\chi(G)= \chi(K_{3,3})=-3$.

Now assume $G$ has a $K_{3,3}$ minor, is connected, and that $\chi(G)=-3$. So $G$ can be built by adding vertices and edges to a 
split $K_{3,3}$, $H$. Since $\chi(G) = -3 = \chi(H)$ an equal number of
vertices and edges are added. And as $G$ and $H$ are both connected, we can build $G$ from $H$ through a sequence of connected graphs as follows.
At each step we add a vertex along with one of its edges so as to connect
the new vertex to the connected graph of the previous step.
In other words, $G$ is obtained from $H$ by a series of vertex splits. Thus, $G$ is also a split $K_{3,3}$.
\end{proof}

Starting from $G$ MMIK, Lemma~\ref{lem2ap} implies that every $G-a,b$ is non-planar. The next two lemmas show that, if $G-a,b$ is a split $K_{3,3}$ then $a$ and $b$ both must have
independent paths to each of the original vertices.

\begin{lemma}
\label{lemapaths}%
Let $G$ be a split $K_{3,3}$. The graph $G+a$ is $1$--apex if there is an original vertex, $v$, such that every path from $a$ to $v$ contains another original vertex.
\end{lemma}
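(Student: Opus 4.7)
\textit{Proof proposal.} The plan is to find an apex vertex for $G+a$ by analyzing how $a$ sits relative to the $K_{3,3}$-structure of $G$. Up to symmetry between the two parts of $K_{3,3}$, write the original vertices as $\{v, v_2, v_3\}$ and $\{w_1, w_2, w_3\}$ with $v$ in the first part. The basic observation is that $G-v$ is planar: since $K_{3,3}-v = K_{2,3}$ is planar and D0, D1, D2 preserve planarity of the simplification, $G-v$ consists of a split $K_{2,3}$ on the remaining five originals together with pendant trees hanging at the $w_j$ where $v$'s incident paths used to terminate. Writing $H_v$ for the connected component of $v$ in $G - \{v_2, v_3, w_1, w_2, w_3\}$, the hypothesis says that $a$ has no neighbor in $H_v$, so every neighbor of $a$ in $G-v$ lies in the split $K_{2,3}$ subgraph rather than in one of those pendant trees.

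Set $B := \{w_j : w_j \text{ is in the component of } a \text{ in } (G+a) - \{v, v_2, v_3\}\}$, the set of $w$-side originals that $a$ reaches without using a $v$-side original. My first attempt is to take $v$ as the apex. The three faces of the planar embedding of the split $K_{2,3}$ are each bounded by a $4$-cycle through $v_2, v_3$ together with two of $w_1, w_2, w_3$. If $|B| \le 2$, I pick the face whose boundary contains the elements of $B$; every neighbor of $a$ --- originals in $\{v_2, v_3\} \cup B$, along with non-originals sitting on paths from $v_i$ to $w_j$ for $j \in B$ --- lies on that face's boundary. After routing the pendant trees of $H_v - v$ into the adjacent faces, I can drop $a$ inside the chosen face and draw its edges without crossings, so $G+a-v$ is planar and $v$ is an apex.

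If $|B|=3$, then $(G-v)+a$ already contains the $K_{3,3}$-minor with parts $\{v_2, v_3, a\}$ and $\{w_1, w_2, w_3\}$, so $v$ no longer works and I instead take $w_1$ (equivalently any $w_j$) as the apex. The same analysis applied to $G - w_1$ --- a split $K_{3,2}$ with parts $\{v, v_2, v_3\}$ and $\{w_2, w_3\}$ plus pendant tails --- lets me place $a$ in the face bounded by the $4$-cycle through $v_2, v_3, w_2, w_3$, routing the now-dangling tails off $v_2$ and $v_3$ into this face as needed. I expect the main obstacle to be the rigorous planarity verification in this second case, specifically ruling out a competing $K_{3,3}$-minor with parts $\{v, v_2, v_3\}$ and $\{w_2, w_3, a\}$: such a minor would need a branch set meeting both $v$ (forcing inclusion of a vertex of $H_v$) and $a$'s branch set (forcing a vertex outside $H_v$), contradicting the hypothesis that $a$ is separated from $v$ by the other five originals. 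The absence of a $K_5$-minor follows similarly, since any two branch sets containing $v$ and $a$ respectively would require an adjacency realized through $H_v$.
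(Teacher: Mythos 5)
Your argument is correct in outline and runs on the same engine as the paper's: the hypothesis confines every neighbor of $a$ to the part of $G$ away from the branch $H_v$ containing $v$, so $a$ can be dropped into a single face of a planar remnant of $G$. The difference is the choice of apex. The paper deletes, uniformly, an original vertex $w$ adjacent to $v$ in the underlying $K_{3,3}$ and notes that every neighbor of $a$ then lies on, or hangs off, the split $4$--cycle through the remaining four originals; that single choice covers both of your cases, so your split on $|B|$ is avoidable --- your $|B|=3$ branch essentially reproduces the paper's argument, while your $|B|\le 2$ branch (apex $v$) is a valid alternative the paper does not use. Your observation that $v$ itself cannot serve as the apex when $|B|=3$ is correct and is, implicitly, why the paper reaches for $w$ instead. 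Two small points of rigor. First, the claim that every neighbor of $a$ ``lies on that face's boundary'' is not literally true: a neighbor may sit in a pendant tree hanging off a boundary vertex (off the median $v_2$, off a vertex of the split $4$--cycle, or off $T_{w_j}$). This is harmless --- any such tree meets the rest of the graph in a single vertex which your definition of $B$ forces onto the chosen face's boundary, so it can be routed into that face --- but it should be said. Second, the concluding attempt to exclude $K_{3,3}$ and $K_5$ minors of $(G-w_1)+a$ is both sketchier than the rest (a minor's branch sets need not contain the original vertices, so ``a branch set meeting $v$'' is not forced) and unnecessary once the direct face--insertion argument is in place; I would delete it and finish with the embedding.
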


\begin{proof} 
Consider $G+a$ where $G$ is a split $K_{3,3}$, and say that $v \in V(G)$ is an original vertex such that any path from $a$ to $v$ contains another original vertex. Let  
$w$ be an original vertex that is adjacent to $v$ in the underlying $K_{3,3}$. If $a$ is adjacent to $b \in V(G-{v,w})$, then either $b$ is on the split $4$-cycle in $G - {v,w}$ or else $b$ is a vertex
that has $w$ as its nearest part in the underlying $K_{3,3}$.
It follows that $(G + a) - w$ is planar and $G+a$ is $1$-apex.
\end{proof}

%\begin{rmk} \label{rmkapaths}%
%Lemma~\ref{lemapaths} generalizes in an obvious way when $G$ is the disjoint union of a split $K_{3,3}$ and a graph $H$ such that $H+a$ has a planar diagram with $a$ on the outer face.
%\end{rmk}

\begin{lemma} 
\label{lemapaths2}%
Suppose  $G$ is N2A and  $G^{\ast}  = G-a,b$ is a split $K_{3,3}$ for some $a,b \in V(G)$. Then, in $G^{\ast} + a$, for each original vertex $v$, there is a path from $a$ to $v$ that avoids the other original vertices,  and similarly for $G^{\ast} + b$.
\end{lemma}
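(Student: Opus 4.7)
The plan is to argue by contrapositive using Lemma~\ref{lemapaths} and the hypothesis that $G$ is N2A. Suppose, for contradiction, that some original vertex $v$ admits no path in $G^{\ast} + a$ from $a$ to $v$ avoiding the other original vertices; that is, every $a$-to-$v$ path in $G^{\ast} + a$ passes through another original vertex of the underlying $K_{3,3}$.

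The key observation is that the graph $G^{\ast} + a$ is precisely the induced subgraph of $G$ on $V(G) \setminus \{b\}$, since $G^{\ast} = G - a,b$ and we are re-adding $a$ with all its incident edges within $V(G) \setminus \{b\}$. In other words, $G^{\ast} + a = G - b$. By Lemma~\ref{lemapaths}, the assumed failure of path independence forces $G^{\ast} + a$ to be $1$--apex, so there exists a vertex $w$ in $G^{\ast} + a$ with $(G^{\ast} + a) - w$ planar. Translating back to $G$, this says $G - b, w$ is planar, contradicting the hypothesis that $G$ is N2A. The symmetric argument, applied with the roles of $a$ and $b$ exchanged (so that $G^{\ast} + b = G - a$), handles the other case.

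The only subtle point is verifying that $G^{\ast} + a$, as defined in the paragraph after Remark~\ref{rmkTfree}, really is the induced subgraph $G - b$ rather than some graph with fewer edges; this is immediate from the convention that, when $V(G^{\ast})$ and $\{a\}$ already sit inside a larger graph, $G^{\ast} + a$ denotes the induced subgraph on $V(G^{\ast}) \cup \{a\}$. With that identification in hand the proof is essentially a one-line application of Lemma~\ref{lemapaths}, so there is no real obstacle; the content of the lemma lies in having set up Lemma~\ref{lemapaths} correctly in the first place.
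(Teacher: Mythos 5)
Your proposal is correct and is essentially the paper's own proof: the paper simply notes that $G$ being N2A forces $G^{\ast}+a$ (which is $G-b$) to not be $1$--apex, and then applies Lemma~\ref{lemapaths}. Your write-up just makes the identification $G^{\ast}+a = G-b$ and the contrapositive explicit.
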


\begin{proof} Since $G$ is N2A, $G^{\ast} + a$ is not $1$-apex. Apply Lemma~\ref{lemapaths}.
\end{proof}

We conclude the introduction by characterizing graphs formed by adding a vertex of degree three or four to a split $K_{3,3}$. 

\begin{figure}[ht]
\begin{center}
\includegraphics[scale=0.5]{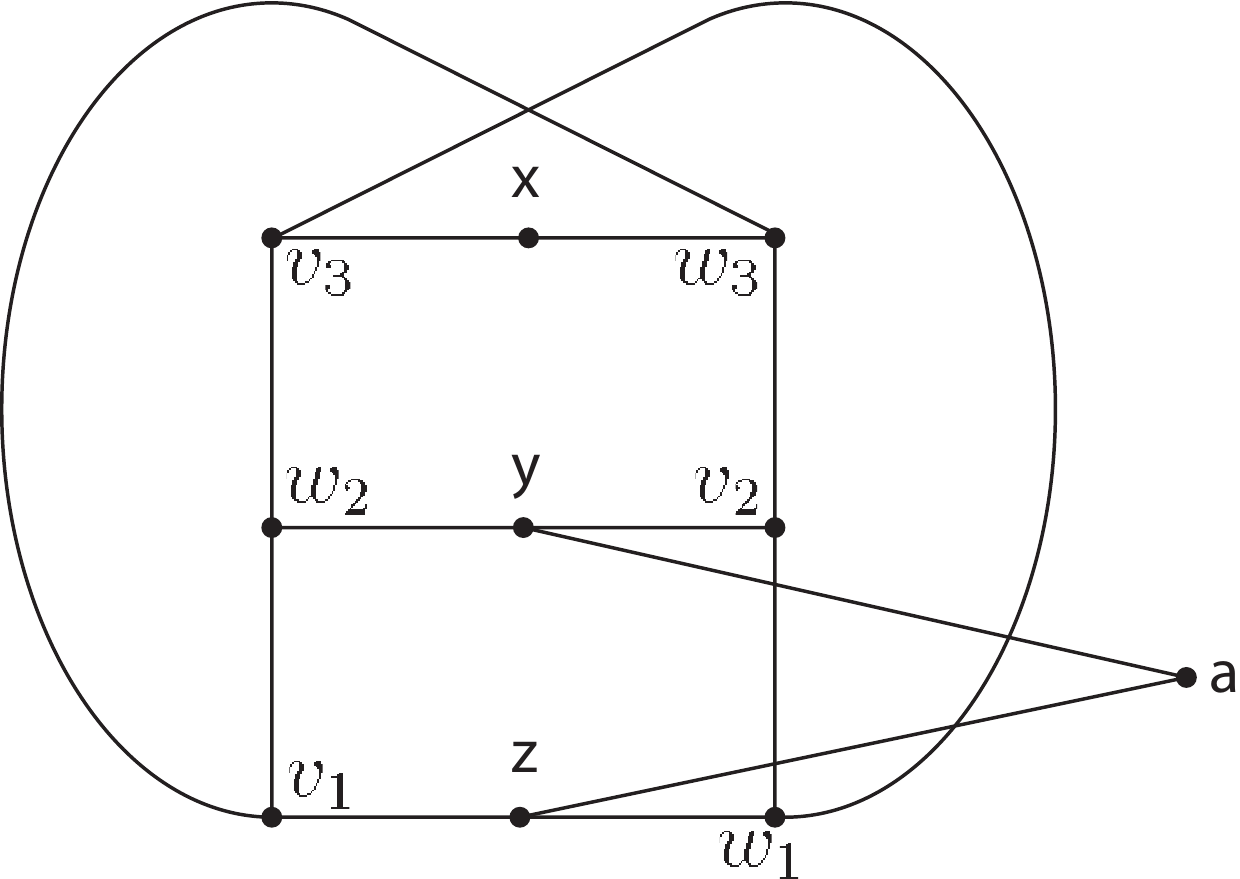}
\caption{Adding a degree $3$ vertex to a split $K_{3,3}$.}\label{figda3}
\end{center}
\end{figure}

\begin{lemma}
\label{lemda3}%
If $G+a$ is formed by adding a degree three vertex $a$ to a split $K_{3,3}$ graph $G$ and $G+a$ is not $1$-apex, then the simplification of $G+a$ is the graph of Figure~\ref{figda3}.
\end{lemma}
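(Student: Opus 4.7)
The plan is to apply Lemma~\ref{lemapaths} in its contrapositive form and then read off the simplification by a pigeonhole argument on $a$'s neighbors.

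Write $O$ for the set of six original vertices of the underlying $K_{3,3}$. Since $G+a$ is not $1$-apex, Lemma~\ref{lemapaths} guarantees, for each $v \in O$, a path in $G+a$ from $a$ to $v$ whose only original vertex is $v$. Labelling the neighbors of $a$ by $b_1,b_2,b_3 \in V(G)$, each such path leaves $a$ through some edge $ab_i$ and then continues entirely within $G$, using only non-original internal vertices. So for each $v\in O$, some $b_i$ must be able to reach $v$ via a path in $G$ whose interior lies in $V(G)\setminus O$.

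Next I would use the structure of the split $K_{3,3}$: Lemma~\ref{lemsplitk33} together with an Euler characteristic count shows that in any branch-set decomposition of the $K_{3,3}$ minor in $G$ all six branch sets are trees and there are exactly nine cross edges, one per edge of $K_{3,3}$. Consequently each connected component of $G\setminus O$ is adjacent to at most two originals, namely the endpoints of a single edge of $K_{3,3}$, and an original is reachable from $b_i$ through a non-original interior if and only if it lies in this two-element boundary (or equals $b_i$ itself). Thus $b_i$ can reach at most two originals; a pigeonhole then forces each $b_i$ to reach exactly two distinct originals, and the three corresponding edges of $K_{3,3}$ to form a perfect matching.

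Finally I would carry out the simplification of $G+a$. Every non-original vertex other than the three $b_i$'s is subsumed by D1 and D2, leaving each edge-path of $G$ outside the matching as a single edge of $K_{3,3}$ and each of the three matching edge-paths as a 2-path through the corresponding $b_i$; the $b_i$'s themselves have degree three in $G+a$ and so survive simplification. What remains is exactly $K_{3,3}$ with the three matched edges subdivided once, together with $a$ adjacent to the three subdivision vertices, which is the graph of Figure~\ref{figda3}. The main obstacle is the structural bound on the boundary of a component of $G\setminus O$; once that is in hand, the pigeonhole and the simplification bookkeeping are routine.
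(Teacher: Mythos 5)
Your argument is correct and is essentially the paper's own proof: both invoke Lemma~\ref{lemapaths} to get a path from $a$ to each of the six original vertices avoiding the others, then pigeonhole over the three neighbors of $a$ to force each neighbor to ``cover'' two originals forming a perfect matching of $K_{3,3}$, and read off Figure~\ref{figda3}. Your branch-set/component-of-$G\setminus O$ bound is just a restatement of the paper's ``nearest part'' fact (Figure~\ref{figK33nrprt}), so no new idea is needed and none is missing.
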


\begin{proof}
By Lemma~\ref{lemapaths}, there are paths from $a$ to each original vertex that avoid all other original vertices. Let $N(a) = \{n_1,n_2,n_3\}$.
As there are six vertices and $d(a) = 3$, then each 
$n_i$ must have an edge as its nearest part, and up to relabeling of  the original vertices, $n_i$ has the edge $v_iw_i$ of $G$ as its nearest part. 
This means the simplification of $G+a$ is the graph of Figure~\ref{figda3}.
\end{proof}

\begin{figure}[ht]
\begin{center}

\includegraphics[scale=.5]{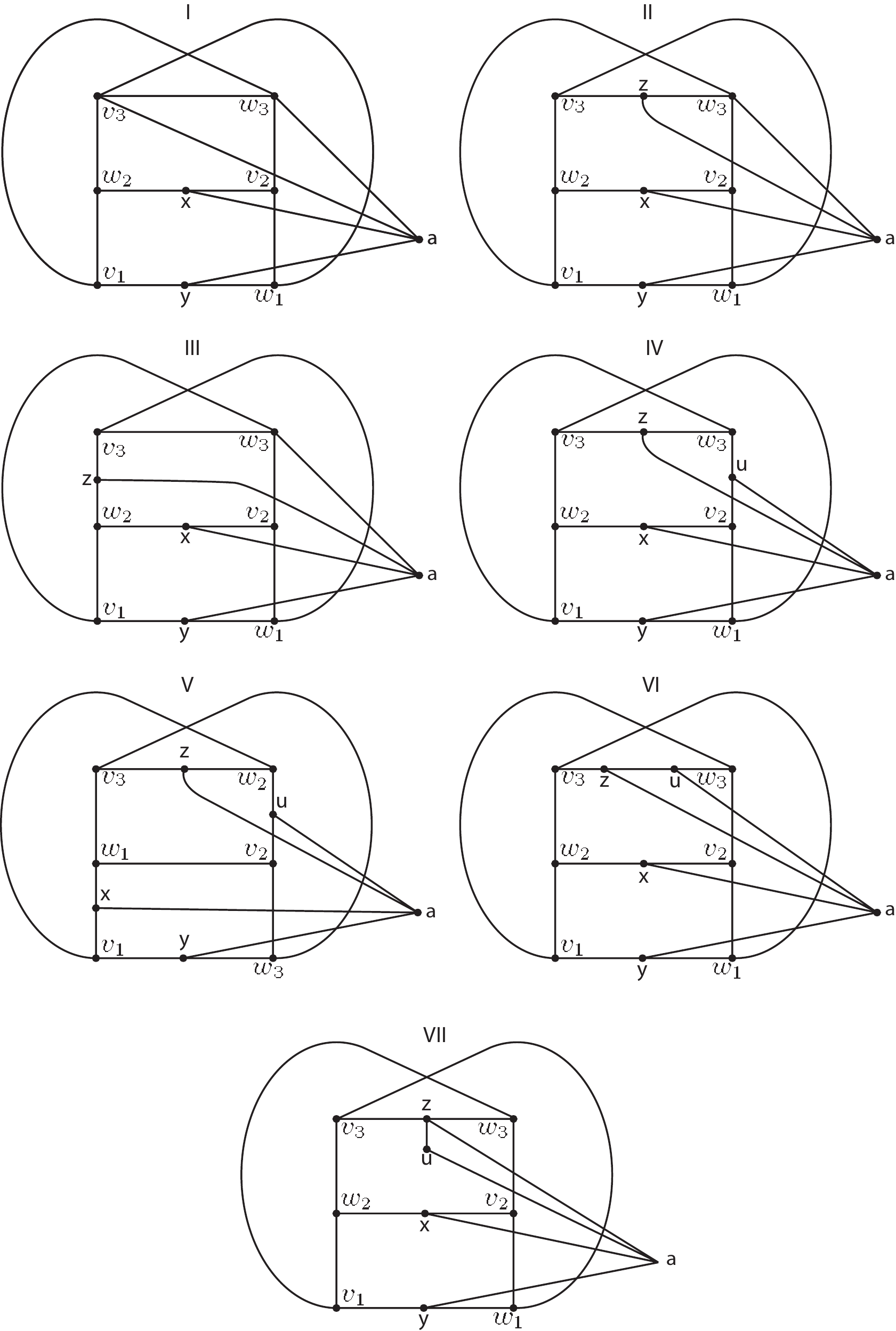}
\caption{Adding a degree $4$ vertex to a split $K_{3,3}$.}\label{figda4}
\end{center}
\end{figure}

% Note: change to make sure each has v1w1 and v2w2 as nearest edges.

\begin{lemma}
\label{lemda4}%
If $G+a$ is formed by adding a vertex $a$ of degree four to a split $K_{3,3}$ graph $G$ and $G+a$ is not $1$-apex, then $G+a$ is one of the seven graphs in Figure~\ref{figda4}.
\end{lemma}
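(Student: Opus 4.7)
The plan is to mirror the argument of Lemma~\ref{lemda3}. Let $N(a) = \{n_1, n_2, n_3, n_4\}$ be the four neighbors of $a$. Because $G+a$ is not $1$-apex, Lemma~\ref{lemapaths} produces, for each original vertex $v$ of $K_{3,3}$, a path from $a$ to $v$ avoiding the other five originals; such a path enters $G$ through some $n_i$, forcing the nearest part of $n_i$ to contain $v$. Writing $P(n_i) \subseteq V(K_{3,3})$ for the vertex set of this nearest part (a single original vertex, or the two endpoints of a nearest edge), we obtain the coverage condition $\bigcup_{i=1}^{4} P(n_i) = V(K_{3,3})$.

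The remainder is a combinatorial enumeration. Let $k$ be the number of $n_i$ with an edge nearest part, so $4-k$ have a vertex nearest part. Since edges contribute two vertices and vertex nearest parts only one, $2k+(4-k) \geq 6$ forces $k \geq 2$. I would then treat $k = 2, 3, 4$ in turn, using $\mathrm{Aut}(K_{3,3})$ to identify equivalent configurations.

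For $k=2$, the two edges must form a matching of $K_{3,3}$ disjoint from the two vertex targets, which forces the vertex targets to lie on opposite sides of the bipartition. For $k=3$, if the three edge nearest parts are distinct they cover four, five, or six original vertices: coverage four is insufficient; coverage five comes from a $P_3 \cup K_2$ subgraph and forces the vertex target at the unique uncovered vertex; coverage six comes from a perfect matching with the vertex target free. Repetition among the three edge nearest parts in the $k=3$ case fails the coverage bound. For $k=4$, the coverage bound rules out the multiplicity patterns $(2,2)$, $(3,1)$, and $(4)$; the surviving possibilities are four distinct edges forming a size-$4$ edge cover of $K_{3,3}$ (every such cover has bi-degree $(2,1,1)$ on each side, and these split into two automorphism classes distinguished by whether the edge joining the two degree-$2$ vertices lies in the cover) and the pattern $(2,1,1)$ with the three distinct edges forming a perfect matching and one edge doubled. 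For each surviving configuration I draw the simplification of $G+a$, realizing each subdivided edge as a degree-$3$ vertex adjacent to $a$ sitting between two original vertices; these reproduce the seven graphs of Figure~\ref{figda4}.

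I expect the main obstacle to be the $k=4$ case: one must carefully establish that every size-$4$ edge cover of $K_{3,3}$ has the claimed bi-degree sequence, split such covers into exactly two automorphism classes, and verify that the doubled-edge multiplicity pattern yields a single further configuration. A secondary concern is the $k=3$ analysis, where one must check that the possible $3$-edge subgraphs covering five or six vertices are exactly the perfect matching and $P_3 \cup K_2$ subcases listed, up to $\mathrm{Aut}(K_{3,3})$.
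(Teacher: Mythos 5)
Your overall strategy is the paper's: invoke Lemma~\ref{lemapaths} to get the coverage condition on nearest parts, note by pigeonhole that at least two neighbors of $a$ must have edge nearest parts (the paper even records that two of these edges are disjoint), and then enumerate configurations, splitting on whether some neighbor has a vertex as its nearest part. Your write-up is considerably more explicit than the paper's one-line ``three graphs \dots and four more,'' which is welcome. However, there is a concrete arithmetic gap: your enumeration produces $1+2+3=6$ isomorphism classes ($k=2$: one; $k=3$: perfect matching plus free vertex target, and $P_3\cup K_2$ plus forced vertex target; $k=4$: two classes of $4$-edge covers with distinct edges, plus one doubled matching edge), yet you assert without comment that these ``reproduce the seven graphs of Figure~\ref{figda4}.'' The mismatch sits in the $k=4$ case: the paper claims \emph{four} graphs when no neighbor of $a$ has a vertex nearest part (graphs IV--VII of the figure, of which VI and VII each contain a triangle and each have eleven vertices), whereas your count yields three, only one of which --- the doubled matching edge, realized as two consecutive subdivision vertices $v_1$--$n_1'$--$n_2'$--$w_1$ both joined to $a$ --- contains a triangle.

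So as written the proof does not close. You must either exhibit the missing configuration or show that the figure's list is redundant, and your current case analysis silently assumes the answer. The place to look is your treatment of a repeated edge nearest part: you treat ``one edge doubled'' as a single configuration, but two neighbors of $a$ can share the edge $v_1w_1$ as nearest part via the two distinct local pictures of Figure~\ref{figK33nrprt} (ii) and (iii), and you have not verified that all such realizations simplify to the same graph $G+a$ (nor that none of them forces a multiple edge in the simplification, a possibility the paper explicitly flags and sets aside). A complete argument needs to classify the simplified structure of the subtree joining $v_1$ and $w_1$ when it carries two neighbors of $a$, not just the multiset of covered original vertices. Until you either produce a fourth triangle-free-or-not $k=4$ graph from that analysis or prove that exactly one arises (in which case you should say explicitly that the lemma's list is a superset and your six classes suffice), the final step ``these reproduce the seven graphs'' is an unjustified leap rather than a verification.
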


\begin{proof}
By Lemma~\ref{lemapaths}, there are paths from $a$ to each original vertex that avoid all other original vertices. 
Let $N(a) = \{n_1,n_2,n_3,n_4\}$.
As there are six vertices and $d(a) = 4$, then there is an 
$n_i$, say $n_1$, that has an edge, say $v_1w_1$, as its nearest part. Since there are four original vertices left and three neighbors of $a$, another $n_i$, say $n_2$, must have an edge
as its nearest part with vertices disjoint from $\{v_1, w_1\}$, call it $v_2 w_2$. 
There are three graphs generated
when $a$ has a neighbor whose nearest part is an original vertex of $G$ and four more when $a$ has no such neighbor. Figure~\ref{figda4} shows the graphs that results from this condition.
\end{proof}

\section{14 vertex graphs}

In this section, we will show that the only 14 vertex MMIK graph on 21 vertices is the KS graph $C_{14}$ (Figure~\ref{figH12C14}ii).
See \cite{KS} for the names, such as $C_{14}$, of the KS graphs. We first characterize N2A graphs.

\begin{prop} 
\label{prop14N2A}%
Let $G$ be a connected $(14,21)$ graph. If $G$ is N2A, then $G$ is the KS graph $C_{14}$.
\end{prop}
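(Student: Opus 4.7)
The plan is to show that $G$ must be 3-regular, then combine the hypothesis that every 2-vertex deletion is non-planar with the structural lemmas from Section 2 to force girth at least 6, which identifies $G$ as the Heawood graph ($=C_{14}$) via the Moore bound.

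First I would argue that $G$ is 3-regular. Since graphs with at most 20 edges are 2-apex~\cite{Ma}, every proper connected minor of $G$ has at most 20 edges and is therefore 2-apex, so $G$ is MMN2A. A MMN2A graph necessarily has $\delta \geq 3$: deleting an isolated vertex, or contracting an edge at a vertex of degree 1 or 2, yields a strictly smaller minor that is still N2A, contradicting minimality. Combined with the handshake identity $2 \cdot 21 = 42 = 3 \cdot 14$, this forces every vertex to have degree exactly 3.

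Next I would establish that $G$ is triangle-free. Assume for contradiction that $abc$ is a triangle in $G$, and let $a', b', c'$ denote the third neighbors of $a, b, c$ respectively. The graph $G - a, b$ has 12 vertices and 16 edges, with $c$ of degree 1 and $a', b'$ of degree 2. After simplifying (D1 at $c$ followed by D2's), the resulting graph remains non-planar as a minor of $G-a,b$, and its Euler characteristic is one less than that of a split $K_{3,3}$. Applying Lemma~\ref{lemsplitk33} together with Lemmas~\ref{lemda3} and \ref{lemda4} constrains how the non-planar core sits relative to the triangle; repeating the analysis for the pairs $\{a,c\}$ and $\{b,c\}$ and using 3-regularity to limit how edges can be distributed at $a', b', c'$ should produce incompatible local demands in a 14-vertex graph.

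With triangle-freeness in hand, analogous (though looser) analyses of $G - v, w$ for pairs $v, w$ lying on short cycles would rule out 4-cycles and 5-cycles: each such cycle creates low-degree vertices in some simplification, which the split-$K_{3,3}$ lemmas then force into untenable positions. By the Moore bound, a cubic graph of girth at least 7 requires at least 22 vertices, so $G$ has girth exactly 6, and the unique cubic graph of girth 6 on 14 vertices is the Heawood graph, i.e.\ $C_{14}$.

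The main obstacle I anticipate is the triangle-freeness step: a priori 12-vertex graphs with 15 or 16 edges have considerable freedom to be non-planar, so the difficulty lies in combining the constraints imposed by the three pair-deletions at the triangle $\{a,b\}, \{a,c\}, \{b,c\}$ with those arising from pairs among $\{a', b', c'\}$. The split-$K_{3,3}$ machinery of Lemmas~\ref{lemapaths}--\ref{lemda4}, in particular the classification of degree-3 and degree-4 attachments, should be the essential leverage for closing this case.
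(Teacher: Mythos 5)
Your overall architecture---show $G$ is cubic, force girth at least $6$, then invoke the Moore bound and the uniqueness of the $(3,6)$-cage to identify $G$ as the Heawood graph $C_{14}$---is a genuinely different route from the paper's, and the endgame is sound: a cubic graph of girth at least $7$ needs at least $22$ vertices, and the Heawood graph is the unique cubic graph on $14$ vertices of girth $6$. The $3$-regularity step is also fine. The gap is that essentially the entire burden of the proof sits in eliminating $3$-, $4$-, and $5$-cycles, and none of those three steps is carried out; each is only asserted to ``produce incompatible local demands.'' Moreover, the tools you cite do not apply as stated in the triangle case: if $ab$ is an edge of a triangle, then $\|\Gab\| = 16$ and $\chi(\Gab) = -4$, so $\Gab$ is not a split $K_{3,3}$; Lemma~\ref{lemsplitk33} fails to apply, and Lemmas~\ref{lemda3} and~\ref{lemda4} presuppose a split $K_{3,3}$ base, so they give no purchase without a further reduction you have not supplied. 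The $4$- and $5$-cycle cases do have $\chi = -3$ after deleting two non-adjacent vertices of the cycle, but there the deleted pair has common neighbors, simplification removes exactly the degree-one vertices through which $a$ and $b$ attach, and checking the hypotheses of Lemma~\ref{lemapaths2} requires tracking those attachments in detail; ``low-degree vertices in some simplification'' is not yet an argument. Each elimination is a case analysis comparable in length to the one you are trying to avoid.

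For contrast, the paper never passes through girth. It chooses $a$ and $b$ non-adjacent with no common neighbors (always possible in a cubic graph on $14$ vertices), so that $\Gab$ has degree sequence $(3^6,2^6)$ and $\chi = -3$ and hence is a split $K_{3,3}$; Lemma~\ref{lemda3} then pins down how $a$ attaches, a second application pins down $b$, and $C_{14}$ is reconstructed directly. That single well-chosen deletion replaces your three separate girth arguments. Your approach could plausibly be completed and the cage endgame is attractive, but as written the decisive steps are missing.
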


\begin{proof} Let $G$ be a connected $(14, 21)$ graph and assume $G$ is N2A.
If a $21$ edge graph $G$ has $\delta(G) < 3$ then, by applying a deletion operation, $G$ simplifies to a graph with fewer than $21$ edges and is therefore $2$-apex. Since $14 \times 3 =  2 \times 21$,  $G$ must have the degree sequence $(3^{14})$. For any vertex $a$, $G-a$ has degree sequence $(3^{10},2^3)$. Now choose another vertex, $b$, such that $G^{\ast} = G-a,b$ has the sequence $(3^6,2^6)$ (i.e., $a$ and $b$ have no common neighbors). There are enough degree $3$ vertices in $G-a$ to assure we can always choose such a $b$.

Since  $G$ is N2A and $G^*$ has the sequence $(3^6,2^6)$, then $G^*$ must be a split $K_{3,3}$.
By Lemma~\ref{lemda3}, $G^*+a$
simplifies to Figure~\ref{figda3}. Then $G' = (G^{\ast}+a) - w_3$ is another split $K_{3,3}$.

\begin{figure}[ht]
\begin{center}
\includegraphics[scale=0.5]{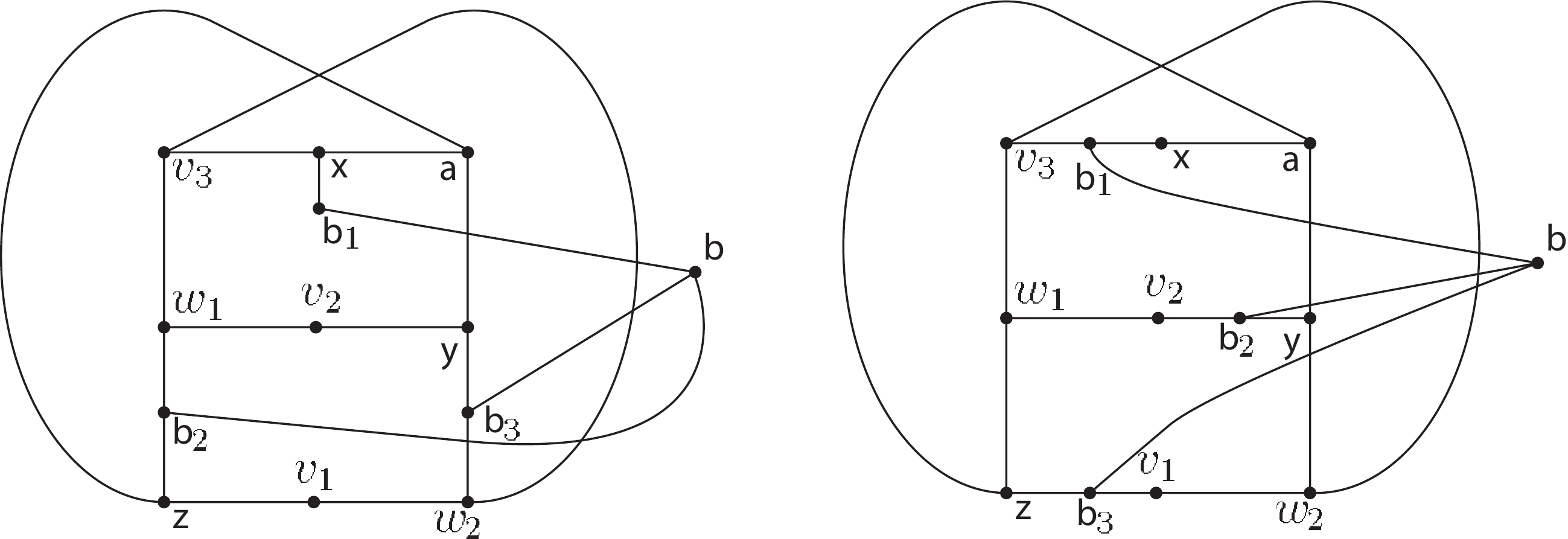}
\caption{Two possibilities for $G^\prime+b$.}\label{figda3andb}
\end{center}
\end{figure}

By  Lemma~\ref{lemda3}, $b$ must have a path to $a$ that avoids $v_3$, $w_1$, $w_2$, $y$ and $z$.
Since $a$ and $b$ have no common neighbors, this means $b$ has a neighbor $b_1$ that is adjacent
to $x$. So, there are two cases: in $G'+b$, either $b_1$ is of degree two, or else it has $v_3$ as a third neighbor. (See Figure~\ref{figda3andb}.)

In either case, $b_1$ gives paths from $b$ to the original vertices $a$ and $v_3$ and there are three ways to split the remaining four original vertices into two pairs. However, we see that $G- w_2,z$ is planar 
(and $G$ is $2$--apex), unless we make the choices shown in Figure~\ref{figda3andb}.
In both cases, adding $w_3$ back will give us $C_{14}$.
Hence the only connected (14,21) graph that is N2A is $C_{14}$.
\end{proof}

\begin{cor}
The only MMIK $(14,21)$ graph is the KS graph  $C_{14}$.
\end{cor}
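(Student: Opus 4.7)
The plan is to combine Proposition~\ref{prop14N2A} with Lemma~\ref{lem2ap}, since essentially all of the work has already been done. First I would observe that any MMIK graph must be connected: if $G$ were disconnected, then one component $G_0$ would itself be IK (intrinsic knotting lives in a single component), and $G_0$ would be a proper minor of $G$ still possessing the property, contradicting minor minimality. So any MMIK $(14,21)$ graph $G$ is a connected $(14,21)$ graph.

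Next, Lemma~\ref{lem2ap} says that every IK graph is N2A, so in particular our MMIK $G$ is a connected $(14,21)$ graph that is N2A. Proposition~\ref{prop14N2A} then pins $G$ down to be exactly the KS graph $C_{14}$.

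For the converse direction, I would simply cite Kohara and Suzuki~\cite{KS}, who proved that all 14 KS graphs (including $C_{14}$) are MMIK. This establishes that $C_{14}$ is indeed a MMIK $(14,21)$ graph, and by the preceding paragraph it is the only one.

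There is no real obstacle here; the corollary is essentially a packaging of Proposition~\ref{prop14N2A} together with the containment $\{\text{IK}\} \subseteq \{\text{N2A}\}$ from Lemma~\ref{lem2ap}, the trivial observation that MMIK graphs are connected, and the known MMIK status of $C_{14}$ from~\cite{KS}. The entire argument should fit in two or three sentences.
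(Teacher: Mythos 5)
Your proposal is correct and matches the paper's (implicit) argument exactly: the corollary is deduced from Proposition~\ref{prop14N2A} via Lemma~\ref{lem2ap}, together with the connectedness of MMIK graphs and the Kohara--Suzuki result that $C_{14}$ is MMIK. Nothing is missing.
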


\section{13 vertex graphs}

\begin{prop} The only MMIK $(13,21)$ graph is the KS graph $C_{13}$.
\end{prop}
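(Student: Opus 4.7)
The plan is to combine the $\TY$-reduction of Remark~\ref{rmkTfree} with a direct structural analysis in the triangle-free case. Let $G$ be a MMIK $(13,21)$ graph. If $G$ contains a triangle, then applying a $\TY$ move produces an IK graph $G'$ on $14$ vertices and $21$ edges. Since graphs of size $20$ are not IK, $G'$ is MMIK, and by the previous section's Corollary, $G' = C_{14}$. Thus $G$ is recovered from $C_{14}$ by a single $\YT$ move. Since $C_{14}$ is the Heawood graph, which is vertex-transitive (indeed arc-transitive), any $\YT$ move on $C_{14}$ yields the same graph up to isomorphism, namely the KS graph $C_{13}$, and so $G = C_{13}$.

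It therefore suffices to rule out the possibility that $G$ is triangle-free. Supposing $G$ is such a graph, $\delta(G) \geq 3$ and $2\|G\| = 42$ force the degree sequence to be one of $(3^{10}, 4^3)$, $(3^{11}, 4, 5)$, or $(3^{12}, 6)$. By Lemma~\ref{lem2ap}, $G$ is N2A, so every $G - a,b$ is non-planar. The goal is to exhibit a pair $a,b$ for which $G - a,b$ is in fact planar, contradicting N2A. Since $G$ is triangle-free, so is every $G - a,b$, and by Lemma~\ref{lem2comp} any tree component of $G - a,b$ has order at least $4$ and contains no degree-$2$ vertex adjacent to a leaf.

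The strategy is to choose $a$ of maximum degree and $b$ so that $a$ and $b$ share no common neighbor and the total degree $d(a) + d(b)$ is as large as possible. This maximizes the number of low-degree vertices appearing in $G - a,b$ and hence the number of D1 and D2 simplifications available. One then computes $\chi(G-a,b)$ and applies Lemma~\ref{lemsplitk33}: in many subcases $G-a,b$ must be a split $K_{3,3}$, possibly together with additional components whose restricted tree structure (via Lemma~\ref{lem2comp}) is incompatible with triangle-freeness. The reattachment of $a$ and $b$ into $G$ is then controlled by Lemmas~\ref{lemapaths2}, \ref{lemda3}, and \ref{lemda4}, each of which would force a triangle in $G$, the desired contradiction.

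The main obstacle will be the case analysis for the degree sequence $(3^{10}, 4^3)$, where the three degree-$4$ vertices allow considerable flexibility in the choice of $a,b$ and in the placement of $a$ and $b$ relative to the split $K_{3,3}$ structure of $G - a,b$. Enumerating all possible attachment patterns and ruling each out, rather than treating only the simpler cases $(3^{11},4,5)$ and $(3^{12},6)$ where the high-degree vertices are essentially fixed, is expected to require most of the work of the section.
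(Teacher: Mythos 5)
Your handling of the triangle case is complete and correct, and the vertex--transitivity argument for why a $\YT$ move on $C_{14}$ must return $C_{13}$ is a clean way to finish that step; the list of three admissible degree sequences is also right. But for the triangle--free case you have written a plan rather than a proof: the entire substance of the proposition is the case analysis that you defer with phrases like ``is expected to require most of the work of the section,'' and nothing in the proposal actually eliminates any of the three degree sequences. That is a genuine gap --- the argument stops exactly where the proof has to begin.

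Moreover, the plan points at the wrong tools for this order. For each of the high--degree pairs $a,b$ your strategy selects, one gets $\|G-a,b\| \le 13$ and hence $\chi(G-a,b) \ge -2 > -3$, so $G-a,b$ is never a (connected) split $K_{3,3}$; Lemmas~\ref{lemda3} and \ref{lemda4}, which describe adding a single vertex to a connected split $K_{3,3}$, therefore never come into play here --- they are the workhorses for orders $14$, $12$, and $11$, not $13$. What actually closes the argument is more elementary: the Euler characteristic forces a nonplanar $G-a,b$ to consist of a component with a $K_{3,3}$ (or $K_5$) minor together with one or two small tree components, and then (i) a tree of order at most $3$ yields a triangle by Lemma~\ref{lem2comp}; (ii) when $a$ and $b$ are adjacent (the $(3^{11},4,5)$ case), every leaf of every tree component is adjacent to both $a$ and $b$ and so closes a triangle with the edge $ab$ regardless of the tree's size; and (iii) in the one surviving configuration --- degree sequence $(3^{10},4^3)$ with $G-a,b$ a split $K_{3,3}$ plus a star of order $4$ or $5$ --- the leaves' adjacencies to both $a$ and $b$ force $G$ to be $2$--apex or disconnected. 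Your anticipated ``hard case'' of enumerating attachment patterns via Lemma~\ref{lemda4} does not arise, and observation (ii), which your appeal to Lemma~\ref{lem2comp} alone does not supply, is what makes the $(3^{11},4,5)$ case (and the large--tree subcases) go through.
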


\begin{proof}
Let $G$ be an MMIK $(13,21)$ graph. 
As in Remark~\ref{rmkTfree}, if $G$ has a triangle, it must be $C_{13}$,
which is MMIK. So, we will assume
$G$ is triangle-free and force a contradiction (generally, by arguing $G$ must have a triangle).

An MMIK graph $G$ will have $\delta(G) \geq 3$ and one of the following three degree sequences: $(3^{12},6)$, $(3^{11},4,5)$, or $(3^{10},4^3)$.
%Notice that $C_{13}$ has the degree sequence $(3^{10},4^3)$.

Case 1: $(3^{12},6)$ 

Assume $G$ has degree sequence $(3^{12},6)$. Delete $a$ and $b$ not adjacent with $d(a) = 6$, $d(b) = 3$. Then $\| G - a,b \|= 12$ and by \cite{Ma} if $G-a,b$ is not planar, it
has  a $K_2$ component, which, as in Lemma~\ref{lem2comp} results in a triangle in $G$, a contradiction.

Case 2: $(3^{11},4,5)$

Assume $G$ has degree sequence $(3^{11},4,5)$. Delete the degree five and four vertices $a$ and $b$. If $a$ and $b$ are not adjacent, then, as in the previous case, $G$ has a triangle. So, 
we can assume $a$ and $b$ are adjacent. Then $\| G -a,b \| = 13$ and by \cite{Ma} if $G-a,b$ is not planar, it is either $K_5 \cup K_2 \cup K_2 \cup K_2$, in which case $G$ has a triangle,
or has a component with $K_{3,3}$ minor as well as at least one tree component. However, a leaf of a tree component will form a triangle with $a$ and $b$. In either case, we deduce that $G$ has a triangle, a contradiction.

Case 3: $(3^{10},4^3)$

Assume $G$ has degree sequence $(3^{10},4^{3})$. Delete two degree four vertices $a$ and $b$. Assume $a$ and $b$ are not adjacent;
then, $\| G -a,b \| = 13$ and by \cite{Ma} if $G-a,b$ is not planar, it is either $K_5 \cup K_2 \cup K_2 \cup K_2$, in which case $G$ has a triangle,
or has a component with $K_{3,3}$ minor as well as at least one tree component, $T$. If $|T|\leq 3$ then, by Lemma~\ref{lem2comp}, $G$ has a triangle, which is a contradiction. So we'll assume $|T|>3$ and we have two cases: $|T|=4$ or $|T|=5$. (There are at least six vertices in the $K_{3,3}$ component, so at most five
of the 11 vertices in $G-a,b$ left over for $T$.)

Since $\chi(G-a,b) = -2$, there are exactly two components, $T$ and the component with $K_{3,3}$ minor, call it $H$. Moreover, $H$ is
connected with $\chi(H) = -3$ and, therefore, a split $K_{3,3}$ by Lemma~\ref{lemsplitk33}. If $T$ is not a star, Lemma~\ref{lem2comp} 
shows that there is a triangle, which is a contradiction. So we'll
assume $T$ is a star.

If $|T|=4$ then $a$ must be adjacent to all three of its leaves. 
Let $v$ be an original vertex of $H$.
The fourth neighbor of $a$ is either the fourth vertex of $T$ or 
a vertex in $H$. In either case, $G - b,v$ is planar and $G$ is $2$--apex, 
hence not IK.

If $|T|=5$ both $a$ and $b$ are adjacent to the four leaf vertices of $T$ 
and have no neighbors in $H$. So $G$ is not connected and thus not MMIK,
again, a contradiction.

Say that a graph $G$ has this sequence but there does not exist a pair of degree four vertices, $a$ and $b$, such that $a$ and $b$ are not adjacent. Then the three degree four vertices form a triangle in $G$, which is a contradiction. This completes the argument for Case 3 and with it the proof of the proposition.
\end{proof}

\section{12 vertex graphs}
\begin{prop} The only MMIK $(12,21)$ graphs  are the KS graphs $C_{12}$ and $H_{12}$.
\end{prop}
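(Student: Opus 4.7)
The plan is to parallel the structure of the $(13,21)$ proof: split into a triangle case (handled by induction via Remark~\ref{rmkTfree}) and a triangle-free case (handled by a degree-sequence case analysis).

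For the triangle case, if $G$ contains a triangle I apply a $\TY$ move to obtain a $(13,21)$ IK graph $G'$. Since graphs of size 20 are not IK, $G'$ is MMIK and so, by the preceding proposition, $G' = C_{13}$. By Remark~\ref{rmkTfree}, $G$ itself is a Heawood graph, and since it has a triangle it cannot be $H_{12}$, so $G = C_{12}$.

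Now assume $G$ is triangle-free. Then $\delta(G) \ge 3$ and the degree sequence has sum $42$, so the ``excess'' $42 - 36 = 6$ distributes over the twelve vertices in finitely many ways (around eleven sequences, e.g.\ $(3^{11},9)$, $(3^{10},4,8)$, $(3^{10},5,7)$, $(3^{10},6^2)$, $(3^9, 4^2,7)$, $(3^9,4,5,6)$, $(3^9,5^3)$, $(3^8,4^3,6)$, $(3^8,4^2,5^2)$, $(3^7,4^4,5)$, $(3^6,4^6)$). For each sequence I would select vertices $a,b$ of highest available degree, preferring non-adjacent pairs, and exploit the fact that $G$ is N2A (Lemma~\ref{lem2ap}) to conclude that $G-a,b$ is nonplanar of size $21 - d(a) - d(b) + [ab\in E]$. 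When this size is small, the nonplanar classification from \cite{Ma} forces a $K_5$ component or a tree component, and Lemma~\ref{lem2comp} converts small tree components (or leaves adjacent to both $a$ and $b$) into a triangle, contradicting triangle-freeness. This should dispatch the sequences containing a vertex of degree $\ge 5$ fairly uniformly.

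The remaining flat sequences, especially $(3^6,4^6)$ which is the sequence of $H_{12}$, are the main obstacle: here $\|G-a,b\|$ is too large for the bare size arguments of \cite{Ma} to rule out enough structure. Instead I would note that if $G-a,b$ has no tree component it is connected with Euler characteristic $-3$ and hence a split $K_{3,3}$ by Lemma~\ref{lemsplitk33}. Applying Lemma~\ref{lemapaths2} to both $a$ and $b$, and then Lemmas~\ref{lemda3} and \ref{lemda4} to describe how each degree-3 or degree-4 vertex can attach to a split $K_{3,3}$, produces a bounded list of configurations for $G$. Triangle-freeness severely restricts which of Figures~\ref{figda3} and \ref{figda4} can occur simultaneously for several choices of $(a,b)$, and iterating across enough pairs should collapse the list to the single graph $H_{12}$. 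The hard part will be organizing this last step: since Lemma~\ref{lemda4} already produces seven configurations for a single degree-4 vertex, the joint analysis of two such vertices branches heavily, and ruling out each branch (either as $2$-apex, as carrying a triangle, or as disconnected) will require careful bookkeeping rather than a single slick argument.
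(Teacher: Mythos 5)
Your overall strategy coincides with the paper's: dispose of triangles by the inductive $\TY$ argument of Remark~\ref{rmkTfree}, cut down the degree sequences by showing $\|G-a,b\|<13$ is impossible, and attack the flat sequences by identifying $G-a,b$ as a split $K_{3,3}$ (Lemma~\ref{lemsplitk33}) and invoking Lemmas~\ref{lemda3} and~\ref{lemda4}. The difficulty is that you stop exactly where the proof actually lives. For $(3^6,4^6)$ the elimination of the seven configurations of Figure~\ref{figda4} — tracking how many extra vertex splits are forced by comparing $|G^*+a|=11$ with the order of each model graph, locating those splits on the triangles of configurations I and II, and in each branch either exhibiting an explicit planar $G-x,y$, finding a triangle, or arriving at $H_{12}$ — occupies the bulk of the paper's argument and is precisely what you defer to ``careful bookkeeping.'' As written, this is a correct plan rather than a proof.

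Three concrete gaps would also need repair. First, your claim that if $G^*=G-a,b$ has no tree component then it is connected is false: with $\chi(G^*)=-3$ it can be a nonplanar graph together with a cycle of order three or four, and the four-cycle case is not a triangle — the paper kills it by noting that one of $a,b$ has two neighbours on the cycle, so deleting that vertex and any vertex of the $K_{3,3}$ component leaves a planar graph. Second, in the reduction of $\|G-a,b\|<13$, the tree component can be a star on four vertices, which Lemma~\ref{lem2comp} does \emph{not} convert into a triangle; one needs the further observation that $a$ must be adjacent to all three leaves and, by Lemma~\ref{lemapaths2}, to all six original vertices of the $K_{3,3}$, forcing $d(a)\geq 9$ and $\|G-b\|\geq 21$, which is absurd. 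Third, the sequence $(3^7,4^4,5)$ is not dispatched ``uniformly'' by the size filter: the degree-five vertex may be adjacent to every degree-four vertex, so every available pair gives $\|G-a,b\|=13$, and the paper must instead use Lemma~\ref{lemapaths2} (the degree-five vertex has only four neighbours in $G-a,b$, one of degree two, so it cannot reach all six original vertices by independent paths).
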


\begin{proof}
%As in the previous cases, we take note that the degree sequences of $C_{12}$ and $H_{12}$ are $(3^7,4^4,5)$ and $(3^6,4^6)$ respectively.
Suppose $G$ is a MMIK $(12,21)$ graph. As in Remark~\ref{rmkTfree}, if $G$ has a triangle, it must
be $C_{12}$. So we'll assume that $G$ is triangle-free and either show $G$ is $H_{12}$ or else 
deduce a contradiction. Note that $H_{12}$ has degree sequence $(3^6,4^6)$

Let us first consider a $(12,21)$ MMIK graph $G$ such that there exists a pair of vertices $a$ and $b$ with $\|G-a,b\| < 13$. By an Euler characteristic argument, if
$G-{a,b}$ is nonplanar, then it contains at least one tree, $T$, and $|T| \leq 4$. By Lemma~\ref{lem2comp}, 
unless $T$ is a star on four vertices, this means $G$ has a triangle, which is a contradiction.
So we will assume that in the graph $G-{a,b}$, the tree component $T$ has order four and is a star. This implies that the other component must be the graph $K_{3,3}$. Adding the vertex $a$ back into the graph, we see that $a$ needs to be adjacent to all the leaves of $T$. Also, by Lemma~\ref{lemapaths2}, $a$ must be adjacent to every vertex in the $K_{3,3}$. Hence $d(a) \geq 9$ and $\|G-{b}\| \geq 21$ which is impossible. So, if $G-a,b$ has size less than 13, we have a contradiction.

This helps us narrow down the degree sequences we have to consider. For instance, suppose $G$ is a $(12,21)$ graph and has a vertex $a$, such that, $d(a)>5$. Then there is another vertex $b$, 
such that, $b$ is not adjacent to $a$ and $d(b)>2$, so $\|G-a,b\| < 13$, leading to a contradiction, as above. Similarly, if there is a vertex of $a$ of degree five and 
another vertex $b$ such that either $d(b)=5$, or else $d(b)=4$ and $b$ is not adjacent to $a$, then again $\|G - a,b \| < 13$ and we have a contradiction. 
Recall that $G$ MMIK implies $\delta(G) \geq 3$.
In order to avoid a triangle among vertices of degree four or more, it remains only to consider the two cases where $G$ has the degree sequence $(3^{7},4^4,5)$ or $(3^{6},4^{6})$.

Case $1$: $(3^{7},4^4,5)$

Assume $G$ has degree sequence $(3^{7},4^4,5)$. Denote the vertex of degree five as $a$ and recall that it must be adjacent to all the vertices of degree four. 
Delete $a$ and note that $G-{a}$ has the sequence $(2,3^{10})$. Next, delete $b$ such that the degree of $b$ in $G$ was four.
Notice that if $b$ is adjacent to the degree two vertex in $G-{a}$, that would imply a triangle in $G$, so we assume it is not. Then $G-{a,b}$ has the degree sequence $(2^4,3^6)$. If $G-{a,b}$ is nonplanar then, since $\chi(G-{a,b}) = -3$, it contains a split $K_{3,3}$ and if it is not connected, its other component is a cycle of order $3$ or $4$.  

As $a$ has degree five in $G$ and $b$ is one of its neighbors, $a$ has four neighbors in $\Gab$, exactly one of them being a vertex of degree two. This means, in contradiction to Lemma~\ref{lemapaths2}, there is at least one original vertex of the split $K_{3,3}$ that has no path to $a$ that avoids the other original vertices. 
The contradiction shows there is no such graph with degree sequence $(3^{7},4^4,5)$

Case $2$: $(3^{6},4^{6})$

This case is hard as $H_{12}$ has this degree sequence
and $H_{12}$ is a triangle-free MMIK graph.
We can continue to eliminate 
many cases that result in a triangle, but in the end we will
need to explicitly show that a MMIK graph with this sequence is $H_{12}$.
We will delete vertices $a$ and $b$ both of degree four, which we can assume to be nonadjacent. Let $G^* = G - {a,b}$.  Notice that $\chi(G^*)=-3$ and we have two cases:
either $G^*$ is connected or it is not.

Assume that $G^*$ is nonplanar and is not connected. Using Lemma~\ref{lem2comp},
$G^*$ is either a nonplanar $(7,10)$ graph together with a cycle of order three, a nonplanar $(6,9)$ graph together with a cycle of order four, or a nonplanar $(6,10)$ graph together with a star of order four.  In the first case, a cycle of order $3$ is a triangle in $G$, a contradiction. 
In the case where $G^*$ has a cycle of order $4$, denote it by $C$, then the other component is $K_{3,3}$. Since one of $a$ and $b$ has at least two neighbors on $C$, adding that vertex and deleting any vertex of the $K_{3,3}$ in $G^*$ results in a planar graph meaning $G$ is $2$--apex, contradicting $G$ MMIK.
Finally, if $G^*$ has a star of order $4$, then both $a$ and $b$ are adjacent to each of the three leaves of the star. The nonplanar $(6,10)$ has minimal degree one or more and must be a $K_{3,3}$ with an extra edge, call it $v_1v_2$. Then $G- a,v_1$ is planar and $G$ is $2$--apex, contradiction.
Thus, we conclude that if $G^*$ is not connected, then $G$ is not MMIK.

We will now assume that $G^*$ is connected. So by Lemma~\ref{lemsplitk33}, $G^*$ is a split $K_{3,3}$. Then by Lemma~\ref{lemda4}, we see that $G^*+a$  and $G^*+b$ simplify
to one of the seven graphs in Figure~\ref{figda4}. We shall denote our graphs as in the figure and use the vertex labels given there for convenience.

Notice that in the cases of VI and VII, $G^*+a$ is the graph shown (no additional vertex splits are needed) and $G$ has a triangle, a contradiction.
So we will assume that $G^*+a$ (and, similarly, $G^*+b$) simplify to one of the other five graphs. If $G^*+a$ simplifies to V, then $|G^*+a|=11$, so we do not  have any additional vertex splits
(and $G^* + a$ is as shown). Deleting the vertices labeled $z$ and $x$ in the figure, the resulting graph has a planar representation. Furthermore, if $b$ is not a neighbor of $a$, $b$ can be a neighbor to all the other remaining vertices and maintain the graph's planarity. Since our assumption was that $a$ and $b$ are not neighbors, we have shown that G is $2$-apex in the case where $G^*+a$, or, by symmetry, $G^*+b$, simplifies to graph V in Figure~\ref{figda4}.

Going on to the next possibility, assume $G^*+a$ simplifies to IV. Since $|G^*+a|=11$ we do not have any vertex splits. If $b$ is not a neighbor of $y$, then $G-v_1,w_1$ is planar. So assume that $y$ and $b$ are adjacent in $G$. If $b$ is not adjacent to $x$ or if $b$ is not adjacent to $z$ then $G-{y,z}$ and $G-{y,x}$ are planar respectively. Thus $b$ will have $x$, $y$, and $z$ as neighbors. If its fourth neighbor is not $u$, then $G$ will have a triangle. This shows that both $a$ and $b$ will have $x$, $y$, $z$, and $u$ as neighbors. 
But then $G-y,x$ is planar. So, if $G^*+a$ or $G^*+b$ simplifies to IV in 
Figure~\ref{figda4}, then $G$ is not MMIK

Considering the case where $G^*+a$ simplifies to to III in Figure~\ref{figda4}, we notice that III has ten vertices and $G^*+a$ has eleven vertices. This implies that $G^*+a$ is III with a 
vertex split. We will denote the vertex created by this split $u$ and refer to $u$ as the {\bf vertex split}. (In other words, much
as the deletion moves D1 and D2 allow us to imagine edge contractions as vertex deletions, we tend to think of a vertex split in terms of adding a vertex.) Notice that deleting $w_3$ and $z$ from $G^*+a$ gives us a planar graph, unless both $a$ and $b$ have $u$ as a neighbor. Assume $u$ is a neighbor of 
both $a$ and $b$ and recall that $G^*+b$ simplifies to one of  graph I, II, or III. We can rule out II, since that 
would require another vertex split. We then see that in the graph $G^*+b$, the neighborhood of $b$, after deleteing $u$ (by deletion move D2), is $\{x,y,z,w_3\}$, $\{x,y,v_3,w_3\}$, or $\{x,z,v_2,w_3\}$. In all of these cases, if we choose to delete $x$ and $w_3$ we will get a planar graph even if we add $a$ and $b$ back in, since they both have $u$ as a neighbor. Hence, in the case where $G^*+a$ or $G^*+b$ 
simplifies to III in Figure~\ref{figda4}, $G$ is $2$-apex.

Next suppose $G^*+a$ simplfies to graph II in Figure~\ref{figda4}. Notice, as when we considered graph III, there is a vertex split, $u$, on $G^*a$ not shown 
in II. In II, we see that the vertices $z$, $a$, and $w_3$ form a triangle, so we need only consider the graphs for which $u$ is on one of the edges of this triangle. Assume $u$ is 
between $z$ and $b$. We see that $u$ is a neighbor of both $a$ and $b$. If $G^*+b$ is topologically equivalent to graph II, either $G$ contains a triangle (a contradiction), or else the neighborhood of $b$ is 
$\{u,x,y,w_3\}$ or $\{u,x,y,v_3\}$. For both choices of $b$'s neighborhood, $G-{w_3,v_3}$ is planar. So we assume that $G^*+b$ simplifies to graph I in Figure~\ref{figda4}. Then, $b$ is adjacent to $u$ and $u$ is adjacent to $z$, so $b$ is adjacent to $x$ or $y$. Without losing generality, we can say 
that $b$ is adjacent to $x$. Hence, $b$ also has $w_1$ and $v_1$ as neighbors. Clearly, $G-w_1,v_1$ is planar.

We shall now assume that $u$ is between $z$ and $w_3$. Again, $u$ is adjacent to $b$. Not considering cases that would give us triangles, $b$ has the 
neighborhood $\{u,y,x,v_3\}$ if $G^*+b$ simplifies to II in Figure~\ref{figda4}, or else $b$ has the neighborhood $\{u,y,w_2,v_2\}$ or $\{u,x,w_1,v_1\}$ if $G^*+b$ simplifies to I. The graphs $G-{w_3,v_3}$, $G-{w_2,v_2}$, and $G-{w_1,v_1}$ are planar in each of these respective cases.

Next, suppose $u$ is between $w_3$ and $a$. Notice that $b$ is adjacent to $u$, so whether $G^*+b$ 
simplifies to graph I or II in Figure~\ref{figda4}, $b$ will 
have $x$ and $y$ as neighbors. Thus $G-{w_3,v_3}$ is planar.  
Thus, when $G^*+a$ or $G^*+b$ simplifies to II in 
Figure~\ref{figda4}, we arrive at a contradiction.

Lastly, we approach the case where $G^*+a$ simplifies to graph I in Figure~\ref{figda4}. Notice again the triangle formed between $a$, $w_3$, and $v_3$, implies 
there is a vertex split, denote it by $z$, on an edge of the triangle. Obviously, the cases where $z$ is between $a$ and $v_3$ and between $a$ and $w_3$ are symmetric to one another. We next show that they are also symmetric to placing $z$ in between $v_3$ and $w_3$. Indeed, let $H_I$ denote the graph of Figure~\ref{figda4}I. Note that $H_I - a$ and $H_I - v_3$ are 
isomorphic and the identification extends to an isomorphism of $H_I$ that interchanges $a$ and $v_{3}$. This isomorphism shows that a $z$ on $v_3 w_3$ is symmetric to one on $a w_3$.
So, without loss of generality, we will assume that $z$ is between $v_3$ and $w_3$. 

We still have another vertex split, $u$, somewhere on our graph. If we delete $v_3$ and $w_3$, we notice that as long as both $a$ and $b$ are not both adjacent to $u$, then the graph is 
planar. Vertex $b$ is adjacent to $z$ because $z$ has degree $3$ in $G$ and $b$ is also adjacent to $u$, which is is a neighbor of $x$ or $y$ since $G^*+b$ 
is topologically equivalent to I. In either case $b$ is also adjacent to $v_1$ and $w_1$ or $v_2$ and $w_2$ respectively and the graph formed is $H_{12}$ (see Figure~\ref{figH12C14}).

Therefore, if $G$ is a MMIK $(12,21)$ and has no triangle, it will be $H_{12}$. 
\end{proof}

\section{ 11 vertex graphs}

\begin {prop} The only MMIK $(11,21)$ graphs are the KS graphs, $H_{11}$, $E_{11}$, and $C_{11}$.
\end{prop}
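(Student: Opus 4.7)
My plan is to follow the two-step template used for the $(12,21)$ and $(13,21)$ cases. First, if $G$ contains a triangle, applying a $\TY$ move yields a $(12,21)$ IK graph $G'$. Since 20 edge graphs are not IK~\cite{Ma}, $G'$ must be MMIK, so by the previous proposition $G' \in \{C_{12}, H_{12}\}$. Reversing the $\TY$ move identifies $G$ as a graph obtained from $C_{12}$ or $H_{12}$ by a $\YT$ move at some degree three vertex, and the Heawood family (Figure~\ref{figHea}) then forces $G$ to be one of the three 11-vertex KS graphs $H_{11}$, $E_{11}$, $C_{11}$, each of which is already known to be MMIK by Kohara--Suzuki~\cite{KS}.

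The remainder of the proof must rule out triangle-free MMIK graphs of size $(11,21)$. The hypothesis $\delta(G) \geq 3$ and $\sum_v d(v) = 42$ gives $\sum_v (d(v) - 3) = 9$, so there are only finitely many admissible degree sequences. For each sequence I select a pair of non-adjacent vertices $a,b$ (typically the two highest-degree vertices, whose non-adjacency in many cases is forced by triangle-freeness) so as to minimize $\|\Gab\| = 21 - d(a) - d(b)$. Lemma~\ref{lem2ap} forces $\Gab$ to be nonplanar, and the Euler characteristic computation $\chi(\Gab) = 9 - \|\Gab\|$ together with Lemma~\ref{lemsplitk33} identifies the nonplanar component of $\Gab$ as a split $K_{3,3}$, with at most a small tree component whose shape is severely restricted by Lemma~\ref{lem2comp} in the triangle-free setting.

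Once $\Gab$ is pinned down, Lemmas~\ref{lemda3}, \ref{lemda4}, and \ref{lemapaths2} control how $a$ and $b$ may reattach, so that $G^{\ast}+a$ and $G^{\ast}+b$ (with $G^{\ast} = \Gab$) are forced into the short list of configurations of Figures~\ref{figda3} and \ref{figda4}, possibly with additional vertex splits. For each pairing of such configurations I expect either to find a triangle in $G$, contradicting triangle-freeness, or else to exhibit a pair of vertices whose deletion planarizes $G$, contradicting Lemma~\ref{lem2ap}. The workflow mirrors the $(12,21)$ proof; the essential simplification is that no triangle-free graph in the Heawood family has 11 vertices, so every branch of the triangle-free case must close in contradiction rather than yielding a new MMIK graph.

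The main obstacle I anticipate is the balanced degree sequences such as $(3^2,4^9)$ and $(3^3,4^7,5)$, where the high-degree vertices have small surplus and both $G^{\ast}+a$ and $G^{\ast}+b$ may simplify to the richer graphs of Figure~\ref{figda4}. Here the case analysis will resemble the long graph I/II/III discussion in the $H_{12}$ argument, and keeping it manageable will hinge on exploiting the isomorphism symmetries of the candidate configurations exactly as in the $(12,21)$ proof. The payoff is that, unlike there, every sub-case should terminate in contradiction, since the triangle-free 11-vertex slot in the Heawood family is empty.
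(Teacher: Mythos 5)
Your outline matches the paper's strategy for most of the degree sequences: the triangle case is dispatched by the $\TY$/induction argument of Remark~\ref{rmkTfree}, and in the triangle--free case the paper likewise first shows that no pair $a,b$ may remove ten or more edges (a tree component of order $2$ or $3$ would force a triangle via Lemma~\ref{lem2comp}), which narrows the field to six degree sequences; for the sequences containing a vertex of degree $5$ or $6$ the paper then runs exactly the split-$K_{3,3}$ plus Lemma~\ref{lemda4} analysis you describe, with $\Gab$ a $(9,12)$ graph of Euler characteristic $-3$.

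However, there is a genuine gap in your treatment of the sequence $(4^9,3^2)$, which you correctly flag as the hard case but propose to handle with the same machinery. For that sequence the best one can do is delete two non-adjacent degree-four vertices, so $\Gab$ is a $(9,13)$ graph with $\chi(\Gab)=-4$. Lemma~\ref{lemsplitk33} therefore does \emph{not} identify the nonplanar part as a split $K_{3,3}$ (a connected such graph has one edge too many, and the disconnected possibilities are no longer confined to a split $K_{3,3}$ plus a small tree), and consequently Lemmas~\ref{lemapaths}, \ref{lemda3}, and \ref{lemda4} --- all of which presuppose attaching a vertex to a split $K_{3,3}$ --- do not apply. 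The paper sidesteps this by an entirely different device: since the two degree-three vertices have at most six neighbors between them, there is a degree-four vertex $v$ all of whose neighbors have degree four; deleting all four neighbors of $v$ leaves a $(7,5)$ graph with an isolated vertex and maximum degree four, of which there are only seven triangle--free possibilities (Figure~\ref{fig75}), and in each one two of the four deleted vertices can be restored while preserving planarity, so $G$ is $2$--apex. You would need either this four-vertex deletion idea or a new structural lemma for nonplanar graphs of Euler characteristic $-4$ to close this branch; as written, your plan does not terminate there.
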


\begin{proof}

%We begin the proof that $H_{11}$, $E_{11}$, and $C_{11}$ are the only MMIK graphs on $11$ vertices by noting that they have $(3^4,4^5,5^2)$, $(3^5,4^4,5^3)$,
%and $(3^4,4^6,6)$ as their respective degree sequences. 
As in the previous sections, we will use that if an $(11,21)$ graph $G$ is MMIK, 
then it has a minimum degree of at least three, and, following Remark~\ref{rmkTfree}, we assume $G$ is 
triangle--free and look for a contradiction.

By Lemma~\ref{lem2ap}, $G$ is $2$--apex and no $\Gab$ is planar. So assume that we delete
two vertices, $a$ and $b$, and in the process we also delete at least ten edges. The resulting graph $\Gab$ has order $|\Gab \ = 9$ and size $\| \Gab \| \leq 11$ and a minimum degree of 
at least one. Thus $\chi(G-{a,b})\geq-2$. Since $\chi(K_{5})=-5$ then our graph cannot have a $K_5$ minor since that would require at least three trees and we do not have enough vertices. 
(Since $\delta(G-a,b) \geq 1$, a tree has at least two vertices.) 
If $G-{a,b}$ is non-planar it must have a $K_{3,3}$ minor. Now, $\chi(K_{3,3})=-3$ so $G-{a,b}$ will have at least one tree, which must be of order two or three.
By Lemma~\ref{lem2comp} this means $G$ has a triangle, which is a contradiction.
So there can be no pair of vertices $a$ and $b$ that result in the deletion of ten or more edges.

There are six degree sequences that satisfy this condition on the deletion of two vertices: 
$(6,4^6,3^4)$, $(5^4,4,3^6)$, $(5^3,4^3,3^5)$, $(5^2,4^5,3^4)$, $(5,4^7,3^3)$, and $(4^9,3^2)$.

Case $1$: $(6,4^6,3^4)$

Assume the graph $G$ has $(6,4^6,3^4)$ as its degree sequence. Notice that if $\not\exists a,b$ such that $\| \Gab \| \leq 11$,
then the vertex of degree six is a neighbor of each vertex of degree four. Since there must be a pair of adjacent degree four vertices in $G$, then $G$ has a triangle, a contradiction.

Case $2$: $(5^4,4,3^6)$ and $(5^3,4^3,3^5)$

Assume the graph $G$ has either the degree sequence $(5^4,4,3^6)$ or $(5^3,4^3,3^5)$. It's apparent that if we cannot delete an $a$ and $b$ from $G$ such that 
$G-{a,b}$ has $11$ edges, then all the vertices of degree five are mutually adjacent. Hence there is a triangle in $G$, a contradiction.

Case $3$: $(5^2,4^5,3^4)$ and $(5,4^7,3^3)$

Assume that $G$ has either $(5^2,4^5,3^4)$ or $(5,4^7,3^3)$ as its degree sequence. We choose to delete two vertices $a$ and $b$ such that the degree of $b$ is $5$,
 the degree of $a$ is $4$, and $b$ is not a neighbor of $a$. It  may not be immediately obvious why we can choose such an $a$ and $b$ for the degree sequence 
 $(5^2,4^5,3^4)$; however, if $b$ is a neighbor to all the vertices of degree $4$, then the two vertices of degree $5$ are not neighbors, so we can have a $(9,11)$ 
 graph with the deletion of the two degree five vertices.  As discussed above, this leads to a triangle in $G$.
 
So, we can delete vertices $a$ and $b$ that are not adjacent and of degree four and five. This means that $G-{a,b}$ is a $(9,12)$ graph, so $\chi(G-{a,b}) = -3$. 
 If $G-{a,b}$ is nonplanar and disconnected, then it has either a $K_5$ minor or a $K_{3,3}$ minor with an additional component of order at most three. Whether this 
 component is a tree or a cycle does not matter since either way it will imply a triangle in $G$. So, we'll assume that $G-{a,b}$ is connected.
 
 Denote $G-{a,b}$ as $G^*$. Since $G^*$ is connected and $\chi(G^*) = -3$, if it is nonplanar then it has a $K_{3,3}$ minor, and hence, by Lemma~\ref{lemsplitk33}, $G^*$ is a
 split $K_{3,3}$. Using Lemma~\ref{lemda4} and the restriction that $G$ has only $11$ vertices, we see that $G^*+a$ simplifies to one of the graphs I, II, or III in
 Figure~\ref{figda4}. Notice that II automatically implies a triangle in $G$.  If $G^*$ simplifies to III, then deleting $v_1$ and $w_1$, $v_2$ and $w_2$, or 
 $v_3$ and $w_3$ respectively, shows us that $b$ has $y$, $x$, and $z$ as neighbors as $G$ is not $2$-apex. Since $b$ is of degree five and does not have $a$ as a neighbor,
 then adding it back in will create a triangle in $G$.
 
 If $G^*+a$ simplifies to I we notice that there must another vertex split, $z$, on one of the edges on the triangle formed by $a$, $v_3$, and $w_3$. If $z$ 
 is between $v_3$ and $w_3$ then $G-{v_3,w_3}$ is planar. Having $z$ between $a$ and $v_3$ or $a$ and $w_3$ are symmetric cases, so we will assume 
 $z$ is between $a$ and $w_3$. Since $b$ is adjacent to $z$, if $b$ has $w_3$ as a neighbor there is a triangle. If not, since any four of the other seven possible neighbors of
 $b$ will include at least two neighboring vertices, $G$ will have a triangle.
 
 We conclude that if $G$ cannot have $(5^2,4^5,3^4)$ or $(5,4^7,3^3)$ as its degree sequence.
 
Case $4$: $(4^9,3^2)$

This degree sequence can be considered the hard case for $(11,21)$ graphs since the maximum number of edges we can take away with the deletion of two vertices is $8$. 
In that case, $\Gab$ has $9$ vertices and $13$ edges and there are many such nonplanar graphs. So we will apply a slightly different method for this case.
Assuming that $G$ has the degree sequence $(4^9,3^2)$ we first notice that together, the vertices of degree three have at most six neighbors. Hence, there is a vertex of degree four,
denote it by $v$, whose neighbors are all vertices of degree four. If any of the neighbors of $v$ are mutually adjacent, then $G$ has a triangle. Deleting all four neighbors of $v$
gives us a $(7,5)$ graph, $G^*$, that has at least one vertex of degree zero. Also, since $G$ has maximum degree four then $G^*$ also has maximum degree four. Since 
$\chi(G^*)$ = 2 and $G^*$ has at least one vertex of degree zero, then $G^*$ is one of the following graphs with a degree zero vertex added to it: 
one of the four trees of order five and maximum degree four, a cycle of order five together with a vertex of degree zero, a cycle of order four with a vertex split of degree one 
and a vertex of degree zero, or a cycle of order four together with a tree of order two. Since a cycle of order three is a triangle, we exclude those cases. 
The remaining graphs can be seen in Figure~\ref{fig75}. Our goal is to show that in each case we can add back two of the four vertices we deleted back while maintaining planarity.

\begin{figure}[ht]
\begin{center}

\includegraphics[scale=.75]{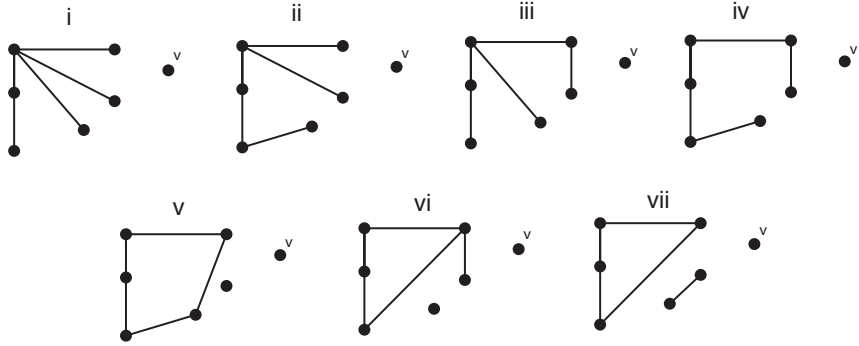}
\caption{The seven triangle--free $(7,5)$ graphs with at least one degree zero vertex and a maximum degree of four.}\label{fig75}
\end{center}
\end{figure}

Since the vertices we delete from $G$ to make $G^*$ all have $v$ as a neighbor, each one will be a vertex of degree three on the graph $G^*-v$. Hence adding one of 
these vertices, call it $a$, back keeps the planarity of $G^*$. Moreover, $G^*+a$ can be arranged such that at most one vertex of $G^*-v$ is not on the outer face and such a vertex, call it 
$u$, will have a degree of two in $G^*+a$. We have three more vertices from which to choose. If all were neighbors of $u$, then $u$ would have a degree of 
five in $G$, which contradicts our degree sequence assumption. So we will be able to add two vertices back into our graph $G^*$ while keeping its planarity. Hence if $G$ has the
degree sequence $(4^9,3^2)$ and does not contain a triangle, then it will be $2$-apex, contradicting Lemma~\ref{lem2ap}.

As we have encountered a contradiction for all possible degree sequences, this concludes the proof.
\end{proof}

\section{10 vertex graphs}

\begin{prop} The only MMIK $(10,21)$ graphs  are the KS graphs, $E_{10}$, $F_{10}$, and $H_{10}$.
\end{prop}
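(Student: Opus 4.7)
The plan is to follow the template of Sections~3--6. First I would apply Remark~\ref{rmkTfree}: if the MMIK $(10,21)$ graph $G$ contains a triangle, then a $\nabla\mathrm{Y}$ move produces an IK $(11,21)$ graph, which by the previous section is one of the KS graphs $C_{11}$, $E_{11}$, or $H_{11}$. Reversing the move and insisting that $G$ be IK identifies $G$ with $E_{10}$, $F_{10}$, or $H_{10}$---the only KS graphs on $10$ vertices. The remainder of the argument assumes $G$ is triangle--free and seeks a contradiction.

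Since $G$ is MMIK, $\delta(G) \geq 3$ and the degree sum is $42$. I would begin with a short preliminary eliminating large maximum degree. A vertex of degree at least $8$ in a triangle--free $10$--vertex graph satisfying $\delta \geq 3$ is impossible: its $\geq 8$ neighbors form an independent set, leaving too few potential edges to bring them to degree $3$. A vertex of degree $7$ similarly forces $G = K_{3,7}$, which is $1$--apex (since $K_{3,7} - v = K_{2,7}$ is planar when $d(v) = 7$) and hence not IK. Sequences containing a vertex $a$ of degree $6$ are handled by choosing a non--adjacent low--degree partner $b$ so that $\|G - a,b\|$ is small; the resulting $G-a,b$ is then analyzed via Lemmas~\ref{lem2comp}, \ref{lemsplitk33}, \ref{lemda3}, and \ref{lemda4} in direct analogy with Sections~4 and~5.

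The substantive cases have $\Delta(G) \leq 5$, giving the degree sequences $(5^6,3^4)$, $(5^5,4^2,3^3)$, $(5^4,4^4,3^2)$, $(5^3,4^6,3)$, and $(5^2,4^8)$. For each, I would choose a non--adjacent pair $a,b$ of high--degree vertices; an edge--counting argument will show that if no such pair exists, the forced adjacencies among the high--degree vertices produce a triangle. The resulting $G^{\ast} = G - a,b$ is a triangle--free graph on $8$ vertices with between $11$ and $13$ edges; since a triangle--free planar $8$--vertex graph has at most $2\cdot 8 - 4 = 12$ edges, $\|G^{\ast}\|=13$ forces nonplanarity automatically, while for fewer edges Lemma~\ref{lem2ap} imposes it. In the connected case Lemma~\ref{lemsplitk33} identifies $G^{\ast}$ as a split $K_{3,3}$ (when $\chi(G^{\ast}) = -3$) or as a split $K_{3,3}$ plus one or two extra edges; the disconnected case is ruled out via Lemma~\ref{lem2comp}. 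Lemmas~\ref{lemda3} and~\ref{lemda4} then pin down the simplifications of $G^{\ast}+a$ and $G^{\ast}+b$, narrowing $G$ to a short enumerable list, and in nearly every subcase I would exhibit an explicit pair $u,v \in V(G)$ with $G-u,v$ planar, contradicting Lemma~\ref{lem2ap}.

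The hard part will be the near--regular sequence $(5^2,4^8)$, where pair--deletion always retains at least $11$ edges and there are several placements of the added vertices $a,b$ relative to the split $K_{3,3}$ structure. The plan here mirrors the $H_{12}$ endgame of Section~6: fix $G^{\ast}+a$ among the seven graphs of Figure~\ref{figda4} (or their one--extra--edge variants when $\chi(G^{\ast}) = -4$), and then, for each possible placement of a vertex split and of $b$'s neighborhood, argue that triangle--freeness combined with the Figure~\ref{figda4} structure forces either a triangle in $G$ or an explicit planar apex pair. Because $G$ itself is assumed triangle--free---a more stringent condition than in the $H_{12}$ analysis---I expect fewer surviving subcases than in Section~6, each dispatchable by directly exhibiting a planar apex pair.
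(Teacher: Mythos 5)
Your skeleton matches the paper's: reduce to the triangle--free case via Remark~\ref{rmkTfree}, use $\delta(G)\geq 3$ and $\Delta(G)\leq 5$ to arrive at exactly the five degree sequences $(5^6,3^4)$, $(5^5,4^2,3^3)$, $(5^4,4^4,3^2)$, $(5^3,4^6,3)$, $(5^2,4^8)$, and then exploit N2A together with the split $K_{3,3}$ machinery. But the two steps where the actual work happens are not carried by the tools you name. First, you propose to pin down $G^{\ast}+a$ and $G^{\ast}+b$ using Lemmas~\ref{lemda3} and~\ref{lemda4}; those lemmas only cover adding a vertex of degree three or four to a split $K_{3,3}$, whereas in every one of your non--adjacent high--degree pairs at least one of $a,b$ has all five of its neighbors inside $G^{\ast}$, so neither lemma applies and you have no degree--five analogue. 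The paper avoids this entirely: when $a,b$ are non--adjacent degree--five vertices, $G-a,b$ is a nonplanar $(8,11)$ graph, and the paper enumerates all eleven such graphs (Figure~\ref{figNP811}), observing that each split $K_{3,3}$ among them has two \emph{adjacent original vertices whose neighborhoods consist entirely of original vertices}; Lemma~\ref{lemapaths} then shows adding $a$ back is $1$--apex or creates a triangle, killing the first four sequences uniformly. (The case $\|G-a,b\|\leq 10$ is dispatched separately by the tree--component argument of Lemma~\ref{lem2comp}.)

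Second, for $(5^2,4^8)$ you anticipate a long Figure~\ref{figda4}--style endgame with ``one--extra--edge variants,'' but you never identify the argument that actually closes this case, and ``I expect fewer surviving subcases'' is not a proof. The paper's resolution is short and of a completely different character: the two degree--five vertices must be adjacent (else $\|G-a,b\|=11$, already handled), triangle--freeness forbids them a common neighbor, so $G-a,b$ is a $3$--regular bipartite graph with parts $N(a)$ and $N(b)$ of size four each; the only such graph is the cube, which is planar, contradicting N2A. Without this (or an equally decisive substitute) your plan does not terminate: you would be enumerating placements of a degree--four vertex against a $\chi=-4$ graph with no guarantee of exhausting the possibilities. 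So the proposal is the right outline but has genuine gaps precisely at the two load--bearing steps.
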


\begin{proof}

Suppose $G$ is a MMIK $(10,21)$ graph. Since $G$ is MMIK it has
minimum degree at least three. As in Remark~\ref{rmkTfree}, we assume $G$ is triangle--free and 
look for a contradiction.

Suppose we can delete two vertices from $G$, $a$ and $b$, such that $\|G-{a,b}\| \leq 10$. If $G-{a,b}$ is non-planar it has either a $K_{3,3}$ minor of a $K_5$ minor.
Since $\chi(G-{a,b})\leq-2$ and $\delta(G-{a,b}) \geq 1$, then $G-{a,b}$ will have a tree of order two or three and by Lemma~\ref{lem2comp}, there will be a triangle in $G$, a contradiction.

If there are $a,b$ $\epsilon$ $V(G)$, such that $\|G-{a,b}\| \leq 11$, then 
$G$ is one of the eleven non-planar graphs in Figure \ref{figNP811}.
With the exception of $iii$ in Figure \ref{figNP811}, which has a tree of order two as a component leading to a triangle in $G$, each graph is a split  $K_{3,3}$. 
Moreover, in each of these graphs, there are two adjacent original vertices
whose neighborhoods are completely comprised of original vertices. Hence, by Lemma~\ref{lemapaths}, adding $a$ (or $b$) back into $G-{a,b}$ will either result in a $1$--apex
graph or will create a triangle. So, $G$ is either 2-apex or has a triangle, a contradiction in either case. 

\begin{figure}[ht]
\begin{center}
\includegraphics[]{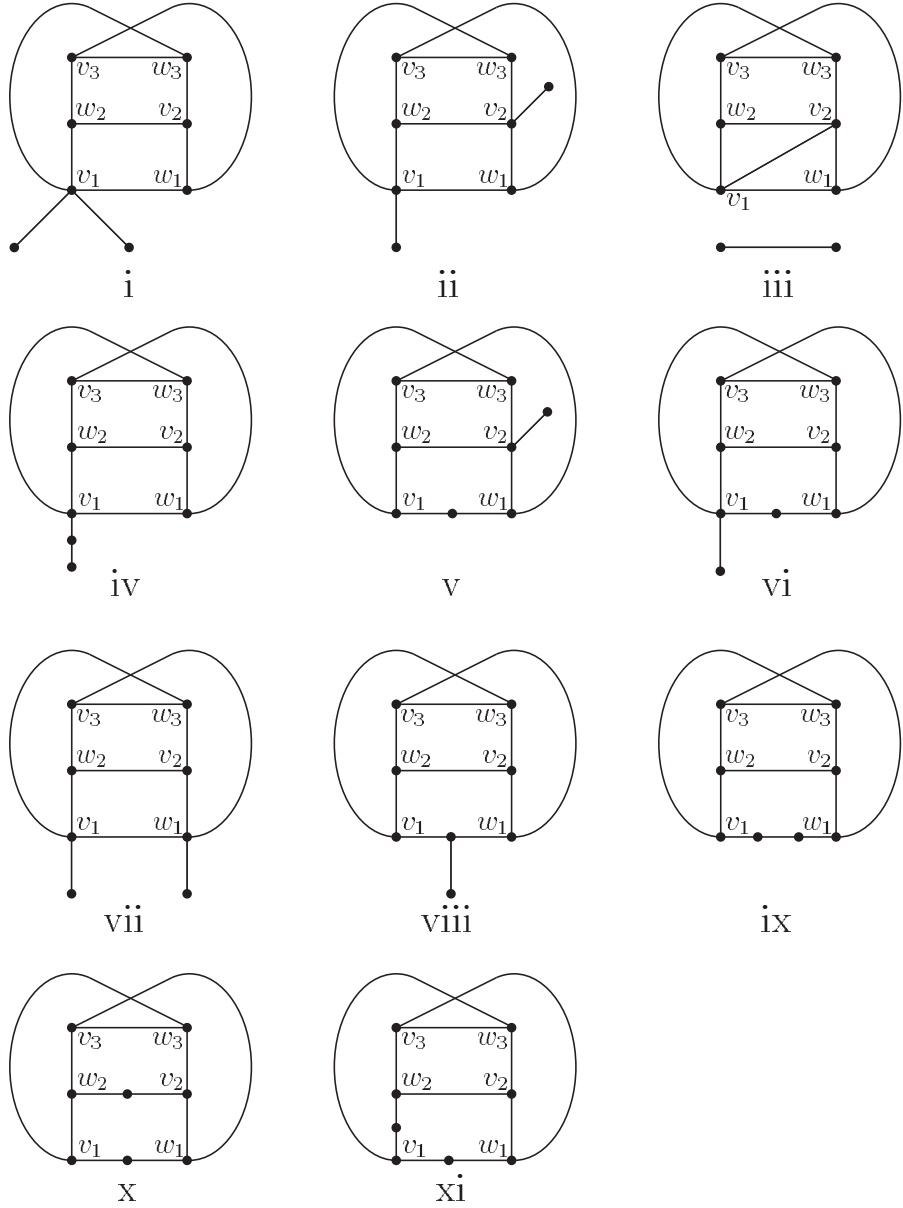}
\caption{Non-planar graphs with eight vertices and eleven edges.}\label{figNP811}
\end{center}
\end{figure}

Thus, it will be enough to consider cases where, for any  $a,b$ $\epsilon$ $V(G)$, $\|G-{a,b}\| \geq 12$.
With this constraint, the only possible degree sequences are $(5^6, 3^4)$, $(5^5, 4^2, 3^3)$, $(5^4, 4^4, 3^2)$, $(5^3, 4^6, 3)$, and $(5^2, 4^8)$. For the first four sequences, 
we realize that if all vertices of degree five are mutually adjacent, then we have a triangle. If not, then 
removing $a$ and $b$ of degree five and non-adjacent give $\| \Gab \| = 11$, a case we considered above. This leaves only the $(5^2, 4^8)$ sequence.

Assume that $G$ has the sequence $(5^2, 4^8)$ and recall that $G$ contains no triangles. 
If the two degree five vertices (call them $a$ and $b$) 
are not neighbors, then $\|G-{a,b}\| = 11$, so we will assume that $a$ and $b$ are neighbors.
This means that $a$ and $b$ do not have any common neighbors, as otherwise, there would be a triangle. Hence, $G-{a,b}$ has the degree sequence $(3^8)$
and, because $G$ is triangle--free, $\Gab$ is a bipartite graph with one part comprised of the neighbors of $a$ in $G$ and the other comprised of the neighbors of $b$ in $G$.
There is only one $3$-regular bipartite graph with two parts of four vertices each. To see this, note that the ``bipartite" complement (i.e. the edges of $K_{4,4}$ not present in $\Gab$) is the disjoint union of four $K_2$'s. Thus, $\Gab$ is the cube and has a planar representation.
Hence, $G$ is $2$-apex, a contradiction.

We conclude that the their are no triangle--free MMIK graphs with $21$ edges and $10$ vertices.
Hence, the $(10,21)$ MMIK graphs are the IK Heawood graphs
of order ten, $E_{10}$, $F_{10}$, and $H_{10}$.
\end{proof}

\section{Graphs that are not $2$--apex.
\label{secN2A}%
}

In this section we prove Propositions~\ref{prop1} and \ref{prop2}.

\begin{proof} (of Proposition ~\ref{prop1})
Since a graph of 20 or fewer edges is $2$--apex~\cite{Ma}, the only N2A graph with $|G| \leq 7$ is $K_7$, which has no 
degree three vertices. So, the proposition is vacuously true for graphs of order seven or less. 

Suppose $G$ is N2A with $|G| = 8$. As discussed in~\cite{Ma}, $G$ must be IK and we refer to the classification
of such graphs due independently to~\cite{CMOPRW} and \cite{BBFFHL}. There are 23 IK graphs on eight vertices, but only four have a vertex 
of degree three. In each case, a $\YT$ move on that vertex results in $K_7$, which is also N2A. 

Again, graphs of size 20 or smaller are $2$--apex. So, we can 
assume $\|G\| = 21$ and $|G| \geq 9$. If $G$ is of order nine and N2A, then, by \cite[Proposition~1.6]{Ma}, $G$ is a Heawood graph (possibly with the addition
of one or two isolated vertices). A $\YT$ move results in the Heawood graph $H_8$ or $K_7 \sqcup K_1$, both of which are N2A.

This leaves the case where $|G| = 10$. Assume $G$ is a $(10,21)$ N2A graph that admits a $\YT$ move to $G'$. For a contradiction, suppose $G'$ is $2$--apex with 
vertices $a$ and $b$ so that $G' - a,b$ is planar. Let $v_0$ be the degree three vertex in $G$ at the center of the $\YT$ move and $v_1,v_2,v_3$ the vertices of the resultant triangle in $G'$. Since $G$ is N2A, it must be
that $\{v_1, v_2, v_3 \}$ is disjoint from $\{a,b\}$. Fix a planar representation 
of $G'-a,b$. The triangle $v_1v_2v_3$ divides the plane into two regions. Let $H_1$ be the induced subgraph on the vertices interior to the triangle and $H_2$ that of the vertices exterior. Then $|H_1|+|H_2| = 4$. Since $G$ is N2A, there is an obstruction to converting the planar representation of $G'-a,b$ into 
a planar representation of $\Gab$. This means that both $H_1$ and $H_2$ contain vertices adjacent 
to each of the triangle vertices $\{v_1, v_2, v_3 \}$. In particular, $H_1$ and $H_2$ each have at least 
one vertex. 

Suppose $|H_1| = |H_2| = 2$. The graph $G-b,v_1$ is non-planar, but, its subgraph $G-a,b,v_1$ is essentially a subgraph of $G'-a,b$ (with the addition of a degree two vertex $v_0$ on the edge $v_2v_3$) and we will use the same planar representation for $G-a,b,v_1$ that we have for $G'-a,b$. Since $G-b,v_1$ is not
planar, there's an obstruction to placing $a$ in the same plane. If we imagine $a$ outside of a disk 
that covers
$G-a,b,v_1$, we see that their is some vertex in an $H_i$ that is hidden from $a$. Without loss of generality, it's one of the vertices $c_1$ or $d_1$ of $H_1$, say $c_1$ that is inaccessible. This means we can assume  that $c_1 v_2 d_1 v_3 $ is a $4$--cycle in $G$. However, as $G'-a,b$ is planar $c_1$ is also hidden from $v_1$ and $c_1v_1$ is not an edge of the graph. 

A similar argument using $G-b,v_2$ allows us to deduce a $4$--cycle $c_2 v_1d_2 v_3$ using the 
vertices $c_2$ and $d_2$ of $H_2$ while showing $c_2 v_2 \not\in E(G)$. However,
it follows that $G - b,v_3$ is planar, a contradiction.

So, we can assume $|H_1| = 3$ while $H_2$ consists of the vertex $c_2$ with $\{v_1, v_2, v_3 \} \subset N(c_2)$. Suppose $H_1$ also has  a vertex, $c_1$, that is adjacent to all three triangle vertices. As $G-b,v_1$ is non-planar, there's a vertex of $H_1$, call it $d_1$, that is hidden from $a$ such that $c_1 v_2 d_1 v_3$ is 
a cycle in $G$ and $d_1v_1 \not\in E(G)$. Similarly, $G-b,v_2$ shows that $c_1 v_1 e_1 v_3$ is in $G$ and $e_1v_2$ is not, $e_1$ being the third vertex of $H_1$. Now, $G-b,v_3$ will be planar unless $d_1e_1 \in V(G)$. However, contracting $d_1e_1$ shows that $G'-a,b$ has a $K_{3,3}$ minor and is non-planar, a contradiction.

If $H_1$ has no vertex $c_1$ that, on its own, is adjacent to the three triangle vertices, then either $H_1$ is
connected, or else it is not but has an edge $c_1d_1$ such that $\{v_1, v_2, v_3 \} \subset N(c_1) \cup N(d_1)$. But, in this latter case, we can rearrange the planar representation of $G'-a,b$ such that the third vertex of $H_1$ is 
exterior to the triangle, returning to the earlier case where $|H_1| = |H_2|=2$. So we will assume $H_1$ 
is connected.

Suppose $H_1$ is not complete, having only two edges $c_1 d_1$ and $d_1 e_1$. Again $G-b,v_1$ shows
that at least two vertices of $H_1$ are in $N(v_2) \cap N(v_3)$ and there are two cases depending on
whether or not $\{c_1, e_1 \} \subset N(v_2) \cap N(v_3)$. If both $c_1$ and $e_1$ are in the intersection, then we can assume $c_1$ is hidden from $a$, meaning $ac_1 \in E(G)$, but $c_1 v_1 \not\in E(G)$. 
Then $G - b, v_2$ shows that $d_1 v_1 e_1 v_3$ is in $G$ and $e_1 v_2$ is not. But then $G-b,v_3$ is planar, a contradiction. If $c_1$ and $e_1$ are not both in $N(v_2) \cap N(v_3)$, we can assume that
$c_1$ and $d_1$ are the common vertices with at most one of those adjacent to $v_1$. If $c_1v_1 \not\in
E(G)$, the argument is the same as above. So, we can assume it's $d_1$ that's hidden, meaning 
$ad_1$ is an edge and $d_1v_1$ is not. In this case, $G- b,v_2$ must be planar, a contradiction.

Finally, if $H_1 = K_3$, then a similar sequence of arguments shows that, in $G'$, the vertices of $H_1$ 
have neighborhoods as follows: $N(c_1) = \{ a,b,d_1, e_1,v_2,v_3 \}$, $N(d_1) = \{a,b,c_1, e_1, v_1, v_3 \}$, 
and $N(e_1) = \{a,b,c_1, d_1, v_1, v_2 \}$. By counting edges, we see that, in fact, $a$ and $b$ each have degree three and we have accounted for all edges in $G'$. Applying the $\TY$ move to recover $G$, we observe that $G$ is $2$--apex (for example, $G - c_1, d_1$ is planar), a contradiction.

We've shown that assuming $G'$ is $2$--apex leads to a contradiction. Thus, the proposition 
also holds in the case $|G| = 10$, which complete the proof.
\end{proof}

\begin{proof} (of Proposition ~\ref{prop2})
Suppose $G$ is MMN2A and $\| G \| = 21$. Note that $\delta(G) \geq 3$ as otherwise a vertex deletion or
edge contraction on a small degree vertex gives a proper minor that is also N2A. This implies $|G| \leq 14$ and
the case of $|G| = 14$ is Proposition~\ref{prop14N2A}. The cases where $|G| \leq 9$ are treated in
\cite{Ma}: a graph with $|G| \leq 8$ is N2A iff it is MMIK, so the proposition follows from the classification
of MMIK graphs of order at most eight; and a graph with $|G| = 9$ is MMN2A if and only if it is one of the Heawood graphs
$E_9$, $F_9$, or $H_9$.

This leaves the case where $|G| = 10$.  If $G$ has a degree three vertex, then apply a $YT$ 
move at that vertex to get a graph $G'$.
By Proposition~\ref{prop1} and the result of \cite{Ma} for graphs of order nine, $G'$ is Heawood, whence $G$ is too.
So, we can assume $\delta(G) \geq 4$ which means the degree sequence of $G$ is $\{4^8,5^2\}$ or $\{4^9,6 \}$.

Suppose there are vertices $a$ and $b$ such that $\| \Gab \| =11$. Then $\Gab$ is one of the graphs of Figure~\ref{figNP811}. 
Since $\delta(G) = 4$, then $\delta(\Gab) \geq 2$. so $\Gab$ is one of graphs ix, x, and xi in the figure. In all three cases,
both $a$ and $b$ must be adjacent to both $v_3$ and $w_3$. For if, for example, $a$ and $v_3$ are not adjacent, then
$G - b, w_3$ is planar. This means $v_3$ and $w_3$ have degree five in $G$, which contradicts the two given degree
sequences for $G$. We conclude there is no choice $a$ and $b$ such that $\| \Gab\| = 11$.

This means $G$ must have degree sequence $\{4^8, 5^2\}$ with the two vertices of degree five adjacent and
$\Gab$ a $(8,12)$ graph. There are two cases depending on whether or not $a$ and $b$ have a common neighbor
in $G$. Suppose first that $c$ is adjacent to both $a$ and $b$. In $\Gab$ vertex $c$ will have degree two and we can use D2 to delete
$c$, arriving either at a $(7,11)$ graph or else a multigraph with a doubled edge. Removing the extra edge if needed, 
let $H$ denote the resulting $(7,11)$ or $(7,10)$ graph.  

\begin{figure}[ht]
\begin{center}
\includegraphics[scale=.5]{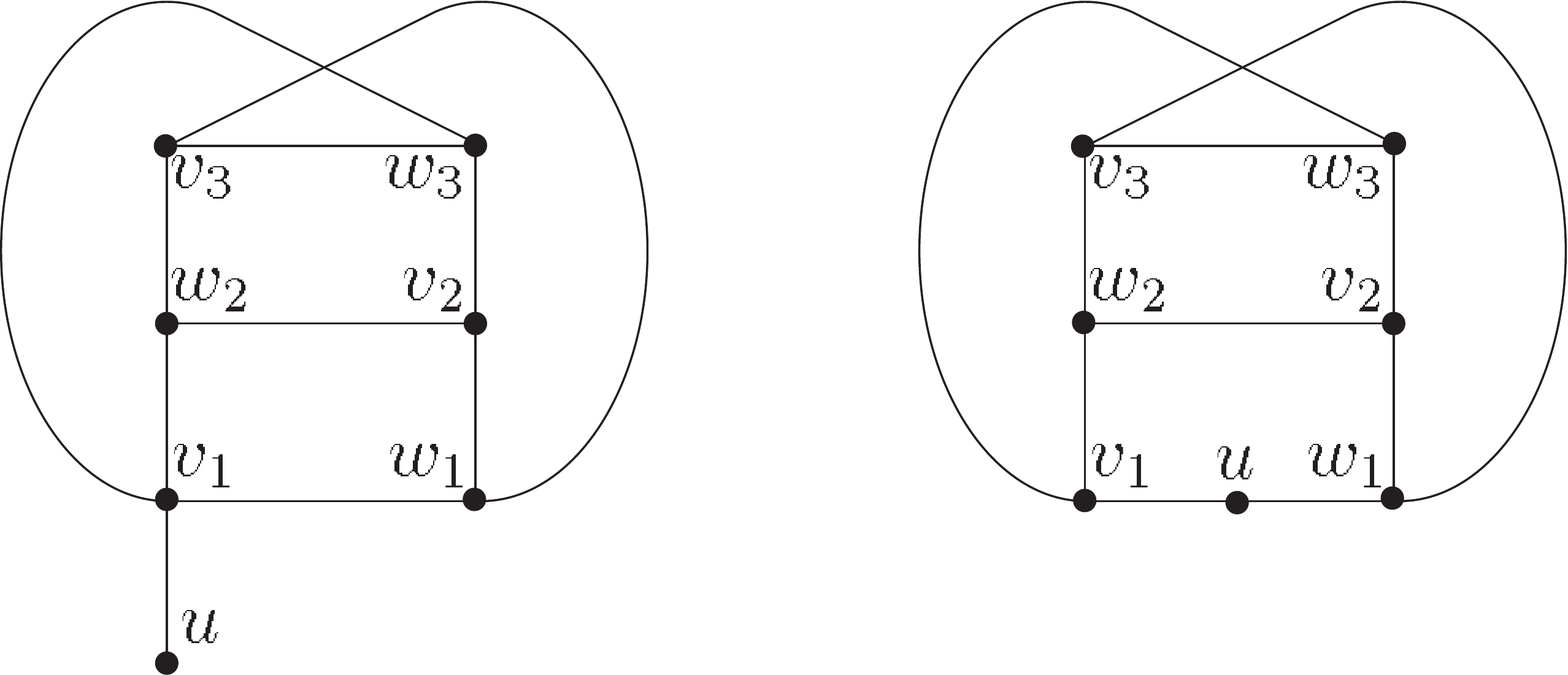}
\caption{The two non-planar (7,10) graphs of minimal degree at least one.}\label{figNP710}
\end{center}
\end{figure}

If $H$ is $(7,10)$, it is one of the two graphs of Figure~\ref{figNP710}. 
In the case of the graph on the left, the doubled edge must be that incident on the degree one vertex as $\delta(\Gab) \geq 2$.
But then the vertex labelled $v_1$ in the figure will have degree five in $\Gab$, contradicting our assumption 
that $a$ and $b$ were the only vertices of degree greater than four. So, we can assume $H$ is the graph to the right in the figure. 
Up to symmetry, the doubled edge of $H$ is either $uv_1$, $v_1w_2$, or $v_2w_2$. We'll examine the first case; the others are similar. 
Doubling $uv_1$ and adding back $c$ leaves $v_1$ of degree four in $\Gab$. Then $G - a,b,v_1$ simplifies to $K_{3,3} - v_1$. Since
$w_1$, $w_2$,  and $w_3$ all have degree three in $\Gab$, they each have exactly one of $a$ and $b$ as a neighbor in $G$. Suppose
$a$ is adjacent to $w_2$. Then $G - a, v_1$ is planar, contradicting $G$ being N2A. For the other two choices of edge doubling, once can again
delete a resulting degree four vertex along with $a$ or $b$ to achieve a planar graph. So $H$ being $(7,10)$ leads to a contradiction.

\begin{figure}[ht]
\begin{center}
\includegraphics[scale=.65]{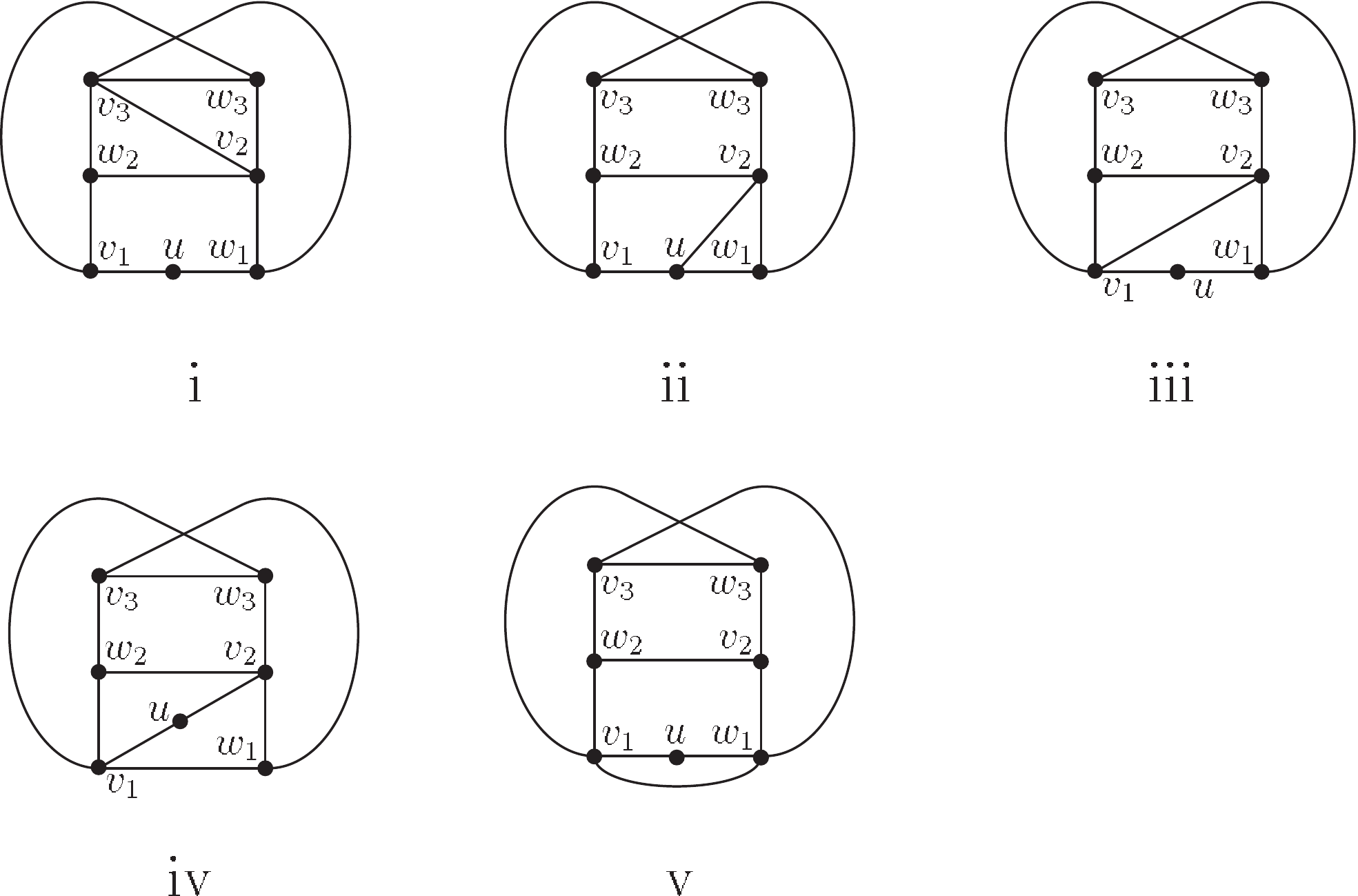}
\caption{The five non-planar (7,11) graphs of minimal degree at least two.}\label{figNP711}
\end{center}
\end{figure}

If $H$ is $(7,11)$, then $\delta(H) = \delta(\Gab) \geq 2$ and $H$ is one of the five graphs of Figure~\ref{figNP711}.
Here we use a similar approach. Deleting one of the degree four vertices of $H$, call it $x$, results in a graph 
$G - a,b,x$ that simplifies to $K_{3,3} - v_1$. Since each of the degree three vertices of $H$ is adjacent to exactly
one of $a$ and $b$, there will be an appropriate choice from those two, say $a$, such that $G - a,x$ is planar, which is a contradiction.
So, $H$ being $(7,11)$ is not possible and we conclude that there is no such vertex $c$ that is adjacent to both $a$ and $b$.

\begin{figure}[ht]
\begin{center}
\includegraphics[scale=.6]{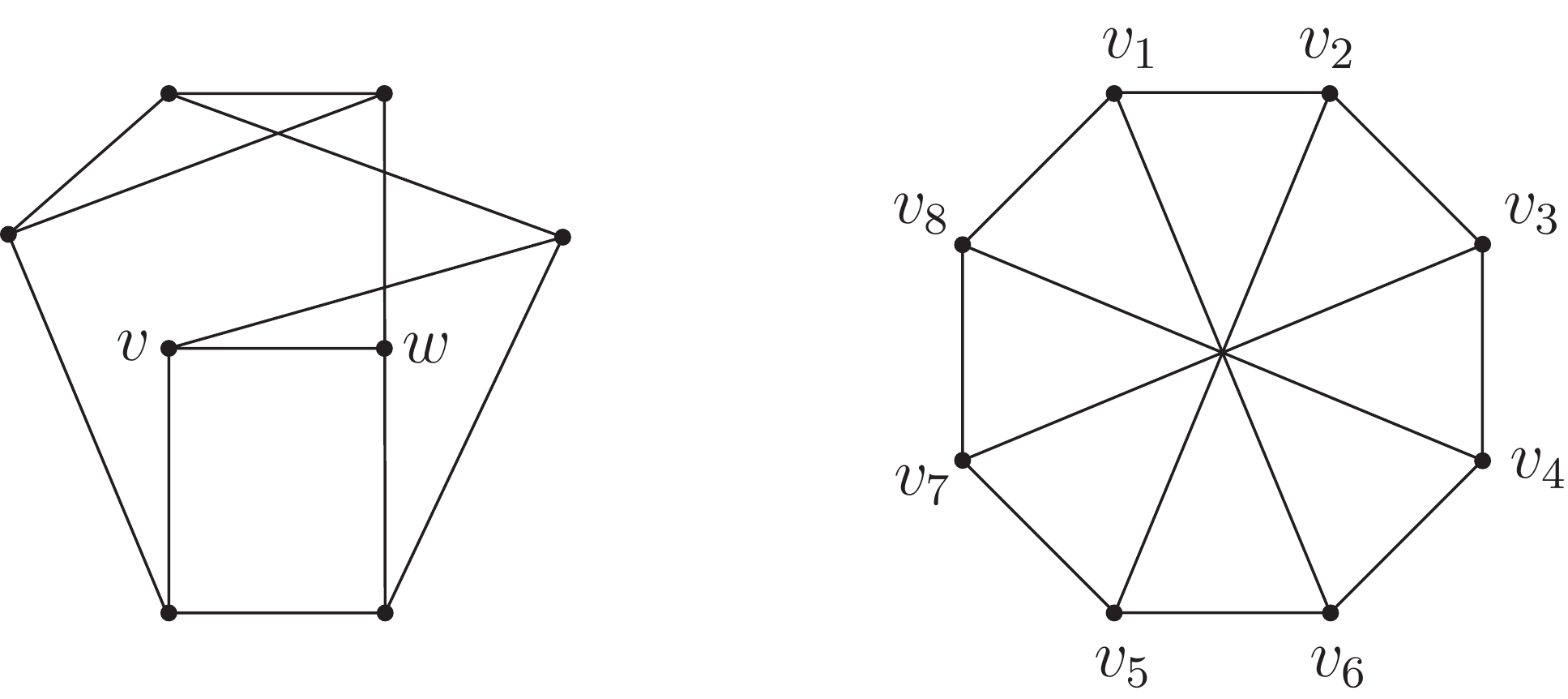}
\caption{The two non-planar cubic graphs of order eight}\label{fig8cubic}
\end{center}
\end{figure}

This means that $\Gab$ is a non-planar cubic graph (i.e., $3$-regular) on eight vertices. There are two such graphs, shown in Figure~\ref{fig8cubic}. If $\Gab$ is the graph to the left in Figure~\ref{fig8cubic}, note that the vertex labelled $v$ is adjacent to exactly one of $a$ and $b$, say $a$. Then $G - a,w$ is planar.

Finally, assume that $\Gab$ is the graph to the right in Figure~\ref{fig8cubic}.
Note that each vertex of $\Gab$ is adjacent to exactly one of $a$ and $b$ in $G$.
If $a$ and $b$ are adjacent to alternate vertices in the $8$--cycle (for example if $\{ v_1,v_3,v_5,v_7 \} \subset N(a)$ and $\{ v_2,v_4,v_6,v_8 \} \subset N(b)$), we obtain graph 20 of figure~\ref{figHea}, a Heawood graph. If not, then we must have two consecutive vertices, say $v_1$ and $v_2$ that share the same neighbor in $\{ a,b \}$, say $a$. That is, we can assume $av_1, av_2 \in E(G)$.  Then $G - a, v_3$ is planar, contradicting $G$ being N2A.

In summary, if $G$ of order 10 is N2A with $\delta(G) > 3$, it must be graph 20 of the Heawood family. This completes the proof of Proposition~\ref{prop2}.
\end{proof}

\section*{Acknowledgments}

We thank Crystal Lane and Anthony Nanfito for participating in a seminar related to this work.
This research was supported in part by a Provost's Research and Creativity Award and a Faculty
Development Award from CSU, Chico.

\end{document}